\DeclareMathOperator{\ord}{ord}
\DeclareMathOperator{\dime}{dim}
\DeclareMathOperator{\pideg}{PI-deg}
\DeclareMathOperator{\cf}{Fract}
\DeclareMathOperator{\gcdi}{gcd}
\DeclareMathOperator{\diagonal}{diag}
\DeclareMathOperator{\kere}{ker}
\DeclareMathOperator{\ran}{rank}
\numberwithin{equation}{section}
 \newtheorem*{Theorem D}{Theorem D}
\newtheorem*{Theorem C}{Theorem C}
\newtheorem*{Theorem A}{Theorem A}
\newtheorem*{Theorem B}{Theorem B}
\newtheorem{theo}{Theorem}[section]
\newtheorem{defi}[theo]{Definition}
\newtheorem{lemm}[theo]{Lemma}
\newtheorem{rema}[theo]{Remark}
\newtheorem{coro}[theo]{Corollary}
\newtheorem{prop}[theo]{Proposition}
\begin{document}

\setcounter{page}{1} 
\baselineskip .65cm 
\pagenumbering{arabic}

\title[3-cyclic quantum Weyl Algebra]{Simple modules over 3-cyclic quantum Weyl Algebra at roots of unity}
\author [S Bera,~S Mandal,~S Mukherjee and~S Nandy]{Sanu Bera$^1$, Sugata Mandal$^2$, Snehashis Mukherjee$^3$, Soumendu Nandy$^4$}
\address {\newline  Sanu Bera$^1$, \newline Gandhi Institute of Technology and Management (GITAM), Hyderabad, \newline Rudraram, Patancheru mandal. Hyderabad-502329. Telangana, India. 
\newline Sugata Mandal$^2$, \newline School of Mathematical Sciences, \newline Ramakrishna Mission Vivekananda Educational and Research Institute (rkmveri), \newline Belur Math, Howrah-711202, West Bengal, India. 
\newline Snehashis Mukherjee$^3$, 
\newline Indian Institute of Technology (IIT), Kanpur, \newline Kalyanpur. Kanpur -208 016, Uttar Pradesh, India. 
\newline  Soumendu Nandy$^4$, \newline Ramakrishna Mission Vivekananda Educational and Research Institute (rkmveri), \newline Belur Math, Howrah-711202, West Bengal, India.
 }
\email{\href{mailto:sanubera6575@gmail.com}{sanubera6575@gmail.com$^1$}; \href{mailto:gmandal1961@gmail.com}{gmandal1961@gmail.com$^2$};\newline \href{mailto:tutunsnehashis@gmail.com}{tutunsnehashis@gmail.com$^3$};\href{mailto:soumendu.nandy100@gmail.com}{soumendu.nandy100@gmail.com$^4$};}

\subjclass[2020]{16D60, 16D70, 16S85}
\keywords{3-cyclic quantum Weyl algebra, Polynomial Identity algebra, Simple modules}
\begin{abstract}
This article undertakes an exploration of simple modules of 3-cyclic quantum Weyl algebra at roots of unity. Under the roots of unity assumption, the algebra becomes a Polynomial Identity algebra and the vector space dimension of the simple modules is bounded above by its PI degree. The article systematically classifies all potential simple modules and computes the algebra's center.
\end{abstract}
\maketitle
\section{Introduction}
Let $\mathbb{K}$ be a field and $\mathbb{K} ^{*}:=\mathbb{K}\setminus\{0\}$. The bi-quadratic algebra on $3$ generators is denoted by $\mathbb{K}[x_1, x_2, x_3; Q, \mathbb{A}, \mathbb{B}]$ and generated by $x_1,x_2,x_3$ subject to the following relations 
\begin{align*}
    x_2x_1-q_1x_1x_2&=ax_1+bx_2+cx_3+b_1\\
    x_3x_1-q_2x_1x_3&=\alpha x_1+\beta x_2+\gamma x_3+b_2\\
    x_3x_2-q_3x_2x_3&=\lambda x_1+\mu x_2+\nu x_3+b_3.
\end{align*}
 where $\mathbb{A}=\begin{pmatrix}
a&b&c\\
\alpha&\beta&\gamma\\
\lambda&\mu&\nu
\end{pmatrix}\in M_3(\mathbb{K}),\  Q = (q_1, q_2, q_3) \in (\mathbb{K}^*)^3\ \text{and}\ \mathbb{B}=(b_1,b_2,b_3)\in \mathbb{K}^3$. 
\par In 2023, V. Bavula introduced the algebra $\mathbb{K}[x_1, x_2, x_3; Q, \mathbb{A}, \mathbb{B}]$ for the first time in \cite{vbav1}. This groundbreaking work explicitly described bi-quadratic algebras on $3$ generators with a PBW basis. Notable examples of such algebras include the polynomial algebra in $3$ variables, the quantum affine space of rank $3$, and the universal enveloping algebra of the $3$-dimensional Heisenberg Lie algebra.
\par A vital class of bi-quadratic algebras on three generators is the $3$-cyclic quantum Weyl algebra \cite{vbav}. For $\alpha,\beta,\gamma\in \mathbb{K}$, the $3$-cyclic quantum Weyl algebra $\textit{A}(\alpha,\beta,\gamma)$ is the $\mathbb{K}$-algebra generated by $x,y$ and $z$ subject to the defining relations
\[xy=q^2yx+\alpha,\ \ xz=q^{-2}zx+\beta,\ \ yz=q^2zy+\gamma.\]
The algebra $\textit{A}(\alpha,\beta,\gamma)$ possesses an iterated skew polynomial representation structured as \[\mathbb{K}[x][y,\sigma_1;\delta_1][z,\sigma_2;\delta_2]\] where $\sigma_1$ denotes an automorphism of $\mathbb{K}[x]$, defined by $\sigma_1(x)=q^{-2}x$, and $\delta_1$ stands as a $\sigma_1$-derivation of $\mathbb{K}[x]$, with $\delta_1(x)=-q^{-2}\alpha$. Additionally, $\sigma_2$ represents an automorphism of $\mathbb{K}[x][y,\sigma_1]$, defined by $\sigma_2(x)=q^{2}x$ and $\sigma_2(y)=q^{-2}y$, while $\delta_2$ functions as a $\sigma_2$-derivation of $\mathbb{K}[x][y,\sigma_1]$, with $\delta_2(x)=-q^{2}\beta$ and $\delta_2(y)=-q^{-2}\gamma$. Consequently, the algebra $\textit{A}(\alpha,\beta,\gamma)$ constitutes a Noetherian domain with a Gelfand–Kirillov dimension of $3$. Moreover the monomials $x^ay^bz^c$ ($a,b,c\in \mathbb{Z}_{\geq 0}$) form a $\mathbb{K}$-basis for $\textit{A}(\alpha,\beta,\gamma)$.
\par  Ito, Terwilliger, and Weng \cite{itw} demonstrated that the algebra $U_q(\mathfrak{sl}_2)$ is a localization of the $3$-cyclic quantum Weyl algebra at powers of $x$, under certain choices of $\alpha, \beta,\gamma \in \mathbb{K}$. Specifically, they established that $U_q(\mathfrak{sl}_2)$ can be presented with generators $x^{\pm1}$, $y$, and $z$, subject to relations such as 
\[xx^{-1}=x^{-1}x=1,\ \displaystyle\frac{qxy-q^{-1}yx}{q-q^{-1}}=1,\ \displaystyle\frac{qyz-q^{-1}zy}{q-q^{-1}}=1, \ \displaystyle\frac{qzx-q^{-1}xz}{q-q^{-1}}=1.\]  This presentation, termed as \textit{equitable}, was shown to be isomorphic to the conventional presentation of $U_q(\mathfrak{sl}_2)$ \cite[Theorem 2.1]{itw}. Additionally, they illustrated that $y$ and $z$ lack invertibility within $U_q(\mathfrak{sl}_2)$ by constructing infinite-dimensional $U_q(\mathfrak{sl}_2)$-modules containing nontrivial null vectors for $y$ and $z$. However, their investigation of finite-dimensional $U_q(\mathfrak{sl}_2)$-modules concluded that $y$ and $z$ act invertibly on them.
\par In the study conducted by Bavula on the algebras denoted as $\textit{A}(\alpha,\beta,\gamma)$ under the condition that $q^2$ is not a root of unity, detailed insights are provided into their prime, completely prime, primitive, and maximal spectra along with the containment relations of prime ideals (within the Zariski–Jacobson topology on the spectrum) \cite[Theorem 5.2, Theorem 5.4, Theorem 7.4 and Theorem 7.6]{vbav}. Additionally, a thorough classification of simple $\textit{A}(\alpha,\beta,\gamma)$-modules is presented  \cite[Corollary 4.4, Corollary 5.5, Corollary 6.6, Corollary 6.9 and Corollary 6.1]{vbav}. For every prime ideal, an explicit set of generators is furnished.
 \par Moreover, it is demonstrated that the center $Z\left(\textit{A}(\alpha,\beta,\gamma)\right)$ of $\textit{A}(\alpha,\beta,\gamma)$ forms a polynomial algebra, being generated by a cubic element \cite[Theorem 2.3]{vbav} \[\Omega=yxz+\frac{q^{-2}\gamma}{q^2-1}x-\frac{q^2\beta}{q^2-1}y+\frac{\alpha}{q^2-1}z.\] A criterion for semisimplicity within the category of finite-dimensional $\textit{A}(\alpha,\beta,\gamma)$-modules is established \cite[Theorem 1.5]{vbav}. Criteria are also delineated for the commutativity of all ideals within the algebra $\textit{A}(\alpha,\beta,\gamma)$, as well as for each ideal to be a unique product of primes \cite[Theorem 1.4, Theorem 7.1]{vbav}.
 \par In this article, we aim to explore the algebra $\textit{A}(\alpha,\beta,\gamma)$ and its irreducible representations under the condition that $q^2$ is a root of unity. When $q^2 \neq 1$, the algebra $\textit{A}(\alpha,\beta,\gamma)$ transforms into a finitely generated module over its center within the roots of unity framework, thus constituting a Polynomial identity (PI) algebra. The theory of PI-algebras serves as a pivotal tool in analyzing this algebraic structure. The PI degree stands out as a fundamental invariant, constraining the $\mathbb{K}$-dimension of simple $\textit{A}(\alpha,\beta,\gamma)$-modules, with this constraint being indeed attained. The determination of the PI degree for $\textit{A}(\alpha,\beta,\gamma)$ holds significant importance as it provides insights into the classification of simple $\textit{A}(\alpha,\beta,\gamma)$-modules. 
 \par  Classifying simple modules for an infinite-dimensional non-commutative algebra poses a formidable challenge. Despite the significance of understanding the representation theory of quantum algebras, a method for this classification remains elusive. Notably, only a few examples exist where simple modules have been classified, primarily in the realm of generalized Weyl algebras (cf. \cite{ba1, ba2, ba3, ba5, ba6, ba7, ba8, ba9}) and Ore extensions with Dedekind rings as coefficient rings \cite{ba4}. Jordan achieved the classification of finite-dimensional simple $R$-modules for a specific class of iterated skew polynomial rings $R=A[y;\alpha][x;\alpha^{-1},\delta]$, where $A$ is an affine, commutative domain over an algebraically closed field \cite{dj}.
 \par Throughout the article $\mathbb{K}$ is an algebraically closed field of arbitrary characteristic and $q^2$ is a primitive $l(>1)$-th root of unity in $\mathbb{K}$.
 \par The article is organized as follows:
 \begin{itemize}
     \item Section $2$ presents key findings concerning the algebra $\textit{A}(\alpha,\beta,\gamma)$. Notably, it includes the computation of the algebra's center (refer to Theorem \ref{cen}).
     \item Section $3$ delves into preliminary results regarding Polynomial Identity algebras. Here, we establish that $\textit{A}(\alpha,\beta,\gamma)$ is PI algebra at root of unity (see Theorem \ref{finite}) and determine its PI degree (see Theorem \ref{pideg}).
     \item Section $5$ is dedicated to the classification of simple $\textit{A}(\alpha,\beta,\gamma)$-modules with invertible action of $z$ (see Theorem \ref{main1}). Importantly, this classification remains unaffected by the parameters $\alpha,\beta$, and $\gamma$. 
     \item In Section $6$ we proceed to construct and classify all simple $\textit{A}(\alpha,\beta,\gamma)$-modules with nilpotent action of $z$ under the assumption that $\beta\neq 0$ (see Theorem \ref{main2}).
     \item Finally, Sections $7$ and $8$ are devoted to the construction and classification of all simple $\textit{A}(\alpha,\beta,\gamma)$-modules with nilpotent action of $z$ when $\beta=0$ (refer to Theorems \ref{main3} and \ref{main4}). This completes the comprehensive classification of simple $\textit{A}(\alpha,\beta,\gamma)$-modules at root of unity.
 \end{itemize}
 \section{Some important results} This section is dedicated to exploring the fundamental identities within $\textit{A}(\alpha,\beta,\gamma)$, where our primary focus lies in uncovering pivotal identities essential for constructing and categorizing the simple $\textit{A}(\alpha,\beta,\gamma)$-modules. Additionally, we explore the center of $\textit{A}(\alpha,\beta,\gamma)$, providing valuable insights into its structure. 
 \par The following few results will be useful
 \begin{theo}
     The following identities hold in $\textit{A}(\alpha,\beta,\gamma)$:
     \begin{enumerate}
         \item $x^ay=q^{2a}yx^a+\displaystyle\frac{1-q^{2a}}{1-q^2}\alpha x^{a-1}$.
         \item $y^ax=q^{-2a}xy^a-q^{-2}\displaystyle\frac{1-q^{-2a}}{1-q^{-2}}\alpha y^{a-1}$.
         \item $x^az=q^{-2a}zx^a+\displaystyle\frac{1-q^{-2a}}{1-q^{-2}}\beta x^{a-1}$.
         \item  $z^ax=q^{2a}xz^a-q^{2}\displaystyle\frac{1-q^{2a}}{1-q^{2}}\beta z^{a-1}$.
         \item $y^az=q^{2a}zy^a+\displaystyle\frac{1-q^{2a}}{1-q^{2}}\gamma y^{a-1}$.
         \item $z^ay=q^{-2a}yz^a-q^{-2}\displaystyle\frac{1-q^{-2a}}{1-q^{-2}}\gamma z^{a-1}$.
     \end{enumerate}
 \end{theo}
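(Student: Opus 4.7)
\medskip

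\noindent\textbf{Proof proposal.} All six identities have the same shape: a monomial in one generator is commuted past another generator, producing a ``straightened'' term plus a correction whose coefficient is a $q$-integer. My plan is to prove each identity by induction on $a\ge 1$, using the three defining relations
\[
xy=q^2yx+\alpha,\qquad xz=q^{-2}zx+\beta,\qquad yz=q^2zy+\gamma
\]
as the base case $a=1$. In each instance the $a=1$ case is exactly one of the defining relations (with the convention $x^0=y^0=z^0=1$, and noting that $(1-q^{\pm 2})/(1-q^{\pm 2})=1$), so the base case is immediate.

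For the inductive step, I would take identity (1) as the model. Assuming $x^ay=q^{2a}yx^a+\frac{1-q^{2a}}{1-q^2}\alpha x^{a-1}$, multiply on the left by $x$ and use $xy=q^2yx+\alpha$ to rewrite the resulting $xy$ occurrence. Collecting the two terms in $x^a$ gives a coefficient
\[
q^{2a}+\frac{1-q^{2a}}{1-q^2}=\frac{q^{2a}(1-q^2)+1-q^{2a}}{1-q^2}=\frac{1-q^{2(a+1)}}{1-q^2},
\]
which is exactly the desired coefficient for $a+1$. Identities (3) and (5) are proved by the identical pattern, using the relations $xz=q^{-2}zx+\beta$ and $yz=q^2zy+\gamma$ respectively; the only change is the scalar $q^{\pm 2}$ appearing in the commutation.

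Identities (2), (4), (6) are the ``mirror'' versions, where a power of $y$, $z$, or $z$ is commuted past $x$, $x$, or $y$. The cleanest route is to rewrite each defining relation in the reversed form, for example
\[
yx=q^{-2}xy-q^{-2}\alpha,\qquad zx=q^{2}xz-q^{2}\beta,\qquad zy=q^{-2}yz-q^{-2}\gamma,
\]
and then run the same induction. Alternatively one can derive (2), (4), (6) directly from (1), (3), (5) by formal rearrangement; I would choose whichever is cleaner per identity, but the induction is essentially mechanical.

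No step here is a genuine obstacle: the only mildly delicate point is bookkeeping the $q$-integer coefficient $\frac{1-q^{2a}}{1-q^2}$ (and its $q^{-2}$ analogue) through the inductive step, which amounts to the telescoping computation displayed above. I would therefore present the full argument only for (1), and indicate that (2)--(6) follow by the same induction with the obvious substitutions.
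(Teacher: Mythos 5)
Your proposal is correct and matches the paper's approach exactly: the paper simply states that the identities follow by induction on the defining relations, which is precisely the induction you set up, with the telescoping of the $q$-integer coefficient handled as you display.
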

 \begin{proof}
     The above identities can be obtained using induction on the defining relations of the algebra.
\end{proof}
\begin{coro}
If $q^2$ is a primitive $l$-th root of unity, then $x^l,y^l$ and $z^l$ are central elements in $\textit{A}(\alpha,\beta,\gamma)$.
\end{coro}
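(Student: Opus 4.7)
The plan is to read off the claim directly from the six identities proved in the previous theorem by specialising the exponent to $a=l$. Since $q^2$ is assumed to be a primitive $l$-th root of unity, we have $q^{2l}=1$, and hence also $q^{-2l}=1$. Consequently the scalar coefficients $\tfrac{1-q^{2l}}{1-q^2}$ and $\tfrac{1-q^{-2l}}{1-q^{-2}}$ appearing as inhomogeneous terms in the identities all vanish (these expressions are well-defined since $l>1$ forces $q^2\neq 1$ and $q^{-2}\neq 1$).

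First I would verify centrality of $x^l$. Substituting $a=l$ into identity (1) gives $x^l y = y x^l$, and substituting $a=l$ into identity (3) gives $x^l z = z x^l$. Since $x^l$ obviously commutes with $x$, and the algebra is generated by $x,y,z$, this shows that $x^l$ lies in $Z(A(\alpha,\beta,\gamma))$.

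Next I would handle $y^l$ in exactly the same way: identity (2) with $a=l$ yields $y^l x = x y^l$, and identity (5) with $a=l$ yields $y^l z = z y^l$, so $y^l$ commutes with all three generators. Finally for $z^l$, identity (4) with $a=l$ produces $z^l x = x z^l$ and identity (6) with $a=l$ produces $z^l y = y z^l$, giving centrality of $z^l$.

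There is no real obstacle here: the only thing to be careful about is that the vanishing of the coefficients requires only $q^{2l}=1$, not that $l$ is literally the order, so the statement holds whenever $q^2$ is any $l$-th root of unity (primitivity is not actually used in the argument itself, but is imposed so that $l$ is the minimal such exponent, which is the meaningful case for later applications).
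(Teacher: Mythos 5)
Your proposal is correct and is exactly the argument the paper intends (the corollary is stated without proof as an immediate consequence of the preceding theorem): substitute $a=l$ into the six identities, observe that $q^{\pm 2l}=1$ kills both the scalar factor and the inhomogeneous term, and conclude that each of $x^l,y^l,z^l$ commutes with all three generators. Your side remark that only $q^{2l}=1$ is needed, not primitivity, is also accurate.
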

Let us consider a cubic element 
\begin{equation}\label{cenel}
\Omega:=yxz+\frac{q^{-2}\gamma}{q^2-1}x-\frac{q^2\beta}{q^2-1}y+\frac{\alpha}{q^2-1}z    
\end{equation}
in $\textit{A}(\alpha,\beta,\gamma)$ and this element belongs to the center of the algebra $\textit{A}(\alpha,\beta,\gamma)$ \cite[Theorem 2.3]{vbav}. Define \[d:=yx+\displaystyle\frac{\alpha}{q^2-1},\ \ e:=xz-\displaystyle\frac{q^2\beta}{q^2-1} \ \text{and}\ \ f:=yz+\displaystyle\frac{\gamma}{q^2-1}.\]
The element $\Omega$ can be written in the form 
\begin{equation}\label{omga}
    \Omega=zd+\displaystyle\frac{\gamma x}{q^2-1}-\displaystyle\frac{\beta y}{q^2-1}
\end{equation}
\begin{theo}\cite[eq 9]{vbav}
    The following commutation relations hold in $\textit{A}(\alpha,\beta,\gamma)$.
    \begin{enumerate}
    \item $ez=q^{-2}ze$ and $fz=q^2zf$.
        \item $f^ae=q^{2a}ef^a-q^{2a-2}\displaystyle\frac{1-q^{2a}}{1-q^2}\alpha z^2f^{a-1}+\displaystyle\frac{\beta\gamma q^2(q^{2a}-1)}{(q^2-1)^2}f^{a-1}$.
        \item $fe^a=q^{2a}e^af-\displaystyle\frac{q^{-2a}-1}{q^{-2}-1}\alpha z^2e^{a-1}+\displaystyle\frac{q^2\beta\gamma(q^{2a}-1)}{(q^2-1)^2} e^{a-1}.$
        
    \end{enumerate}
\end{theo}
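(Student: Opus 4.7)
The plan is to verify the three parts in order, with (1) by direct computation, the $a=1$ case of (2) and (3) by one more direct computation (it is the same identity $fe = q^2ef - \alpha z^2 + \frac{q^2\beta\gamma}{q^2-1}$), and the general case of (2) and (3) by induction on $a$.

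First I would check (1). Expanding $ez = (xz - \frac{q^2\beta}{q^2-1})z$ and $ze = zxz - \frac{q^2\beta}{q^2-1}z$ and using $zx = q^2 xz - q^2\beta$ (which follows from $xz = q^{-2}zx + \beta$) gives $ze = q^2xz^2 - \frac{q^4\beta}{q^2-1}z$, and the scalar corrections collapse to show $ez = q^{-2}ze$. The identity $fz = q^2zf$ is analogous using $zy = q^{-2}yz - q^{-2}\gamma$. An immediate consequence I will record is the rule $f^a z^2 = q^{4a}z^2 f^a$ and $e^a z^2 = q^{-4a}z^2 e^a$, obtained by iterating (1); these will power the inductions.

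Next, for the $a=1$ base case of both (2) and (3), I expand $fe$ and $ef$ directly from the definitions, move $z$'s past $x$ and $y$ with the defining relations, and use $xy = q^2yx + \alpha$ to commute the surviving $xy$ term, so that both $fe$ and $q^2 ef$ have the same leading term $q^2yxz^2$. Collecting the residual contributions, the $yz$ and $xz$ terms cancel identically (this is the one bookkeeping step one should watch), leaving precisely $fe - q^2 ef = -\alpha z^2 + \frac{q^2\beta\gamma}{q^2-1}$, which matches the $a=1$ instance of both (2) and (3).

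For the inductive step of (2), I write $f^{a+1}e = f(f^ae)$, distribute with the induction hypothesis, apply the base identity to rewrite $fef^a$ as $q^2 ef^{a+1} - \alpha z^2 f^a + \frac{q^2\beta\gamma}{q^2-1}f^a$, and use $fz^2f^{a-1} = q^4 z^2f^a$. Collecting coefficients, the $z^2f^a$ term requires the $q$-identity $1 + q^2\frac{1-q^{2a}}{1-q^2} = \frac{1-q^{2a+2}}{1-q^2}$, and the scalar $f^a$ term requires $q^{2a}(q^2-1) + (q^{2a}-1) = q^{2a+2}-1$; both match the claimed $a+1$ formula. The induction for (3) is parallel: $fe^{a+1} = (fe^a)e$, together with $e^a z^2 = q^{-4a}z^2 e^a$ to move $z^2$ past $e^a$, and an analogous identity for $q^{-2}$-powers to collapse the $z^2 e^a$ coefficient. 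The only real obstacle is the constants-chasing in the base case and the coefficient reconciliations in the inductive steps; once those $q$-identities are in hand, both inductions are mechanical.
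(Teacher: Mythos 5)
Your computations are correct, and they do establish the three identities. As a sanity check: in (1), $ze = zxz - \frac{q^2\beta}{q^2-1}z = q^2xz^2 - \frac{q^4\beta}{q^2-1}z = q^2 ez$ using $zx = q^2xz - q^2\beta$, and $zf = q^{-2}yz^2 + \frac{q^{-2}\gamma}{q^2-1}z = q^{-2}fz$ likewise. In the base case, $yzxz = q^2yxz^2 - q^2\beta\, yz$ and $q^2xzyz = xyz^2 - \gamma\, xz$, and after $xy = q^2yx + \alpha$, everything collapses to $fe - q^2ef = -\alpha z^2 + \frac{q^2\beta\gamma}{q^2-1}$ as you say. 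The two $q$-identities in your inductive steps hold by direct telescoping. Your observation that the $a=1$ instances of (2) and (3) are the same identity keeps the argument economical, and the commutation rules $f^az^2 = q^{4a}z^2f^a$, $e^az^2 = q^{-4a}z^2e^a$ are the right lemmas for both inductions.

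That said, the paper itself does not prove this theorem: it is stated with the citation \cite[eq 9]{vbav} and no proof is supplied. So what you have produced is a genuine, self-contained derivation of a result the paper imports from Bavula. Your proof is more elementary in that it stays entirely inside $A(\alpha,\beta,\gamma)$ and requires only the defining relations plus induction, whereas the paper buys the identities at the cost of a reference to the earlier work. This is a reasonable thing to include if one wants the article to be self-contained, and the structure you laid out (verify (1), do $a=1$ once, then two parallel inductions powered by (1)) is the natural way to organize it.
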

 \subsection*{Center of $\textit{A}(\alpha,\beta,\gamma)$:} 
 Let $\mathcal{A}$ denote the quantum affine space generated by $X$, $Y$, and $Z$, subject to the relations:
 \[XY=q^2YX, \ XZ=q^{-2}ZX, \ YZ=q^2ZY.\] We refer to $\mathcal{A}$ as the associated quasipolynomial algebra of $\textit{A}(\alpha,\beta,\gamma)$. We define an ordering on the generators of $\textit{A}(\alpha,\beta,\gamma)$ and $\mathcal{A}$ by $x<y<z$ and $X<Y<Z$ respectively. Let $Z\left(\textit{A}(\alpha,\beta,\gamma)\right)$ be the center of $\textit{A}(\alpha,\beta,\gamma)$, and $\mathcal{Z}$ be the center of $\mathcal{A}$. For any $C \in Z\left(\textit{A}(\alpha,\beta,\gamma)\right)$, the leading term of $C$ must be of the form $kx^ay^bz^c$, where $k \in \mathbb{K}^*$ and $a,b,c \in \mathbb{Z}_{\geq 0}$. Since $xC=Cx$ holds, equating the coefficients of leading terms on both sides yields:
\begin{equation}\label{eq}
q^{2(c-b)}=1.
\end{equation}
Similarly, the equalities $yC=Cy$ and $zC=Cz$ give:
\begin{equation}\label{eq2}
q^{2(a-c)}=q^{2(b-a)}=1.
\end{equation}
From equations \eqref{eq} and \eqref{eq2}, we conclude that $kX^aY^bZ^c$ is a central element in $\mathcal{A}$.
  \par Hence we can define a map $T:Z\left(\textit{A}(\alpha,\beta,\gamma)\right) \rightarrow \mathcal{Z}$ such that \[T(C)=kX^aY^bZ^c\] where $kx^ay^bz^c$ is the leading term of $C\in Z\left(\textit{A}(\alpha,\beta,\gamma)\right)$. Note that $T$ is not linear and $T(0)=0$.
   \begin{prop}
Suppose that $q^2$ is a primitive $l$-th root of unity. Then the center $\mathcal{Z}$ of $\mathcal{A}$ is generated by $X^l,Y^l,Z^l$ and $XYZ$.  
 \end{prop}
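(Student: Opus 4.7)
The plan is to exploit the $\mathbb{Z}_{\geq 0}^3$-grading on $\mathcal{A}$ and reduce the problem to describing the central monomials. The defining relations of $\mathcal{A}$ are homogeneous with respect to the grading that assigns $X^a Y^b Z^c$ the multi-degree $(a,b,c)$, so $\mathcal{A}$ is a $\mathbb{Z}_{\geq 0}^3$-graded algebra. A standard argument (decompose a central element into homogeneous pieces and observe that $[C,r]=0$ for every homogeneous $r \in \mathcal{A}$ forces each piece to be central) shows that $\mathcal{Z}$ is itself a graded subspace. Since each homogeneous component of $\mathcal{A}$ is one-dimensional, this means $\mathcal{Z}$ is spanned by those monomials $X^a Y^b Z^c$ which individually commute with each of $X$, $Y$, $Z$.

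Computing directly with the defining relations, for $m = X^a Y^b Z^c$ one finds $mX = q^{2(c-b)} Xm$, $mY = q^{2(a-c)} Ym$, and $mZ = q^{2(b-a)} Zm$. Hence $m$ is central if and only if $q^2$ annihilates each of $c-b$, $a-c$, $b-a$, and since $q^2$ has order exactly $l$ this is equivalent to $a \equiv b \equiv c \pmod{l}$. The monomials $X^l$, $Y^l$, $Z^l$ satisfy this congruence trivially, and a direct verification shows that $XYZ$ commutes with each of $X$, $Y$, $Z$; hence $\mathbb{K}[X^l, Y^l, Z^l, XYZ] \subseteq \mathcal{Z}$.

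For the reverse containment, take a central monomial $X^a Y^b Z^c$ with $a \equiv b \equiv c \pmod{l}$ and set $\mu = \min(a,b,c)$. Since $\mu$ equals one of $a, b, c$, it shares the common residue, so $a - \mu$, $b - \mu$, $c - \mu$ are nonnegative multiples of $l$ with at least one equal to zero. A short induction on $n$ produces the identity $(XYZ)^n = q^{-n(n-1)} X^n Y^n Z^n$, so $X^\mu Y^\mu Z^\mu$ is a scalar multiple of $(XYZ)^\mu$. All four factors $(XYZ)^\mu$, $X^{a-\mu}$, $Y^{b-\mu}$, $Z^{c-\mu}$ lie in $\mathcal{Z}$ and therefore commute with one another; combining them in standard order expresses $X^a Y^b Z^c$ as a nonzero scalar times an element of $\mathbb{K}[X^l, Y^l, Z^l, XYZ]$, which finishes the inclusion.

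The main obstacle will be the bookkeeping of $q$-prefactors, in particular the induction establishing $(XYZ)^n = q^{-n(n-1)} X^n Y^n Z^n$ and the scalars picked up when commuting $Y^{b-\mu}$ past $Z^\mu$ or $X^{a-\mu}$ past $Y^\mu Z^\mu$. These are routine $q$-commutation manipulations, and in fact many of the offending factors collapse because $q^{2l} = 1$, so the calculation should proceed cleanly once the normal-form ordering convention is fixed.
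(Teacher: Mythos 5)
Your argument is correct, and at the combinatorial core it is the same as the paper's: identify the monomials $X^aY^bZ^c$ that are central (both of you arrive at the condition $a\equiv b\equiv c\pmod l$), then observe that the sub-semigroup $\{(a,b,c)\in\mathbb{Z}_{\geq 0}^3 : a\equiv b\equiv c\pmod l\}$ is generated by $(l,0,0),(0,l,0),(0,0,l),(1,1,1)$. The difference is in how each step is justified. The paper packages the first step via the skew-symmetric matrix $H$ and its kernel modulo $l$, and then invokes De Concini--Procesi \cite[Proposition 7.1(a)]{di} to pass from the semigroup generators to algebra generators of $\mathcal{Z}$. You instead make both steps explicit: you argue directly that $\mathcal{Z}$ is a graded subspace (hence spanned by central monomials), compute the $q$-scalar each monomial picks up against $X$, $Y$, $Z$, and then give an elementary factorization argument using $\mu=\min(a,b,c)$ to express an arbitrary central monomial in terms of $(XYZ)$, $X^l$, $Y^l$, $Z^l$. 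What the paper's route buys is brevity and a framework that generalizes immediately to quantum affine spaces in $n$ variables; what your route buys is self-containment, avoiding the external citation, at the cost of routine $q$-bookkeeping that you correctly flag as the only remaining detail.
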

 \begin{proof}
The skew-symmetric integral matrix associated with $\mathcal{A}$ based on the ordering of the generators is \[H:=\begin{pmatrix}
    0&1&-1\\
    -1&0&1\\
    1&-1&0
\end{pmatrix}.\]
We consider the matrix $H$ as a matrix of a homomorphism $H:\mathbb{Z}^3\rightarrow (\mathbb{Z}/{l\mathbb{Z}})^3$. Then the kernel of $H$ is
\[\ker(H)=\{(a,b,c)\in \mathbb{Z}^3:a\equiv b\equiv c~(\text{mod}~ l)\}.\] It is easy to verify that $\ker(H)\cap (\mathbb{Z}_{\geq 0})^3$ is a sub-semigroup of $\ker(H)$ generated by $(l,0,0)$, $(0,l,0),(0,0,l)$ and $(1,1,1)$. Then the assertion follows from \cite[Proposition 7.1(a)]{di}.
 \end{proof}
The central element $\Omega$ as in (\ref{cenel}) can be expressed using the ordering on the generators $x<y<z$ as \[\Omega:=q^{-2}xyz+\frac{q^{-2}\gamma}{q^2-1}x-\frac{q^2\beta}{q^2-1}y+\frac{q^{-2}\alpha}{q^2-1}z.\] From this expression, for any $n\geq 1$, we can compute
\begin{equation}\label{omgn}
    \Omega^n=(q^{-2})^{\frac{n(n+1)}{2}}x^ny^nz^n+ \text{lower degree terms}
\end{equation}
  \begin{theo}\label{cen}
      The center of the quantum 3-cyclic Weyl algebra $A(\alpha,\beta,\gamma)$ is generated by $x^l,y^l,z^l$ and $\Omega$. 
  \end{theo}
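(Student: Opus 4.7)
The plan is to combine the map $T$ with the Proposition and to induct on the leading monomial (under the PBW monomial order implicit just above the statement, for which any element of $\textit{A}(\alpha,\beta,\gamma)$ has a well-defined leading term $kx^ay^bz^c$) to show that every $C\in Z(\textit{A}(\alpha,\beta,\gamma))$ lies in the subalgebra
\[\mathcal{B}:=\mathbb{K}[x^l,y^l,z^l,\Omega].\]
The inclusion $\mathcal{B}\subseteq Z(\textit{A}(\alpha,\beta,\gamma))$ is immediate from the Corollary and (\ref{cenel}), so only the reverse inclusion requires work. The base case is when $C$ is a constant.

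For the inductive step, let $C$ be non-scalar with leading term $k x^a y^b z^c$. The discussion preceding the statement gives $T(C)=kX^aY^bZ^c\in\mathcal{Z}$, and the Proposition then tells us $(a,b,c)$ lies in the sub-semigroup of $\ker(H)\cap\mathbb{Z}_{\geq 0}^3$ generated by $(l,0,0),(0,l,0),(0,0,l),(1,1,1)$. Writing
\[(a,b,c)=n_1(l,0,0)+n_2(0,l,0)+n_3(0,0,l)+n_4(1,1,1)\]
for nonnegative $n_i$, I would form the central element
\[C':=C-k\,q^{n_4(n_4+1)}(x^l)^{n_1}(y^l)^{n_2}(z^l)^{n_3}\Omega^{n_4},\]
where the scalar $q^{n_4(n_4+1)}=\bigl((q^{-2})^{n_4(n_4+1)/2}\bigr)^{-1}$ is pinned down by (\ref{omgn}) so that the leading term of the subtracted product is exactly $kx^ay^bz^c$. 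Since $x^l,y^l,z^l$ are central by the Corollary, they commute freely past the leading term $x^{n_4}y^{n_4}z^{n_4}$ of $\Omega^{n_4}$, producing a single clean top monomial. Thus $C'$ is central with a strictly smaller leading monomial, and by induction $C'\in\mathcal{B}$, whence $C\in\mathcal{B}$.

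The main subtlety, though not a deep one, is the leading-term bookkeeping in the third step: one must confirm via (\ref{omgn}) and the centrality of $x^l,y^l,z^l$ that the subtraction actually cancels the top term of $C$ and introduces nothing of equal or greater order. Well-foundedness of the monomial order on $\mathbb{Z}_{\geq 0}^3$ then guarantees termination of the descent, completing the proof.
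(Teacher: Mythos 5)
Your proposal is correct and takes essentially the same route as the paper: both arguments induct on the leading term, use the Proposition's semigroup decomposition of $(a,b,c)\in\ker(H)\cap\mathbb{Z}_{\geq 0}^3$ to write the top monomial as $(x^l)^{n_1}(y^l)^{n_2}(z^l)^{n_3}\Omega^{n_4}$ up to scalar via \eqref{omgn}, and subtract to decrease the leading term. Your version merely makes explicit the scalar $q^{n_4(n_4+1)}$ and the well-foundedness of the monomial order, which the paper leaves implicit.
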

  \begin{proof}
     Let $Z_1$ denote the subalgebra of $Z\left(\textit{A}(\alpha,\beta,\gamma)\right)$ generated by the elements $x^l$, $y^l$, $z^l$ and $\Omega$. We use induction on the degree of the leading term to show that any $C \in Z\left(\textit{A}(\alpha,\beta,\gamma)\right)$ is also in $Z_1$.
Since $T(C)$ is a monomial belonging to $\mathcal{Z}$, it must be of the form \[T(C)=kX^{al}Y^{bl}Z^{cl}(XYZ)^d,\ \ \text{for}\ k \in \mathbb{K}^*.\] This implies that the leading term of $C$ will be $k'x^{al+d}y^{bl+d}z^{cl+d}$ for some $k'\in \mathbb{K}^*$. Then choose an element $C'$ in $Z_1$ of the form \[C':=(x^l)^a(y^l)^b(z^l)^c\Omega^d + \text{lower degree terms generated by $x^l,y^l,z^l,\Omega$}.\] 
By the equality (\ref{omgn}), the leading term of $C'$ becomes $k''x^{al+d}y^{bl+d}z^{cl+d}$ for some $k''\in \mathbb{K}^*$. Then the degree of the leading term of $k''C-k'C'$ is less than that of $C$. Hence by induction, $k''C-k'C' \in Z_1$. As $C' \in Z_1$, we have $C\in Z_1$. This completes the proof.
  \end{proof}
\section{Polynomial Identity Algebras}
In this section, we review relevant facts about the Polynomial Identity algebra and prove that $\textit{A}(\alpha,\beta,\gamma)$ is a PI algebra at root of unity. We then compute its PI degree, which enables us to comment on the $\mathbb{K}$-dimension of simple modules. 
\par A ring $R$ is said to be a Polynomial Identity (PI) ring if $R$ satisfies a monic polynomial $f\in \mathbb{Z}\langle x_1,\ldots,x_k\rangle$ i.e., $f(r_1,\ldots,r_k)=0$ for all $r_i\in R$.  The minimal degree of a PI ring $R$ is the least degree of all monic polynomial identities for $R$. PI rings cover a large class of rings including commutative rings. Commutative rings satisfy the polynomial identity $x_1x_2-x_2x_1$ and therefore have minimal degree $2$. Let us recall a result that provides a sufficient condition for a ring to be PI. 
\begin{prop}\emph{(\cite[Corollary 13.1.13]{mcr})}{\label{f}}
If $R$ is a ring that is a finitely generated module over a commutative subring, then $R$ is a PI ring.
\end{prop}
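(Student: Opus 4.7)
The plan is to reduce to the classical Amitsur--Levitzki theorem, which asserts that for any commutative ring $C$, the matrix ring $M_n(C)$ satisfies the standard polynomial identity
\[
s_{2n}(x_1,\ldots,x_{2n}) \;=\; \sum_{\sigma\in S_{2n}} \operatorname{sgn}(\sigma)\, x_{\sigma(1)}\cdots x_{\sigma(2n)}
\]
of degree $2n$. Granted this, it suffices to realize $R$ as a subring of a homomorphic image of some $M_n(C)$, and then invoke the well-known closure of the class of PI rings under subrings and quotients.

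To build such a realization, assume $C$ is the commutative (central) subring and choose generators $r_1,\ldots,r_n$ of $R$ as a $C$-module, so that there is a $C$-linear surjection $\pi\colon C^n \twoheadrightarrow R$ with kernel $K$. For each $r\in R$, left multiplication $\lambda_r$ is a $C$-linear endomorphism of $R$; lifting each element $r\cdot r_i$ back to an arbitrary preimage in $C^n$ produces a matrix $A_r\in M_n(C)$. Set
\[
S \;=\; \{f\in M_n(C) : f(K)\subseteq K\}, \qquad H \;=\; \{f\in S : f(C^n)\subseteq K\}.
\]
Then $S$ is a subring of $M_n(C)$ and $H$ is a two-sided ideal of $S$; the class $[A_r]\in S/H$ is independent of the chosen lifts, and $r\mapsto [A_r]$ defines an injective ring homomorphism $R\hookrightarrow S/H$. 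Since $M_n(C)$ satisfies $s_{2n}$, so do its subring $S$ and the quotient $S/H$, whence $R$ satisfies $s_{2n}$ and is PI.

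The main technical obstacle lies in the routine but careful bookkeeping of the construction above---verifying independence from the choice of lifts, multiplicativity, and injectivity of $r\mapsto[A_r]$, together with the ideal property of $H$ inside $S$. A cleaner alternative, available when $C$ is a domain with fraction field $F$, is to pass directly to $R\otimes_C F$: this is a finite-dimensional $F$-algebra, hence embeds as a subring of some $M_m(F)$ via its left regular representation, so it satisfies $s_{2m}$; the identity then descends to $R$ through the injection $R\hookrightarrow R\otimes_C F$, which holds whenever $R$ is torsion-free over $C$ (the situation in all applications in this paper, where the coefficient ring is a polynomial algebra in central variables).
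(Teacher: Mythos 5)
Your argument is correct. The paper does not actually prove this proposition---it is quoted verbatim from McConnell--Robson \cite[Corollary 13.1.13]{mcr}---and your construction (realize $R$ inside $\operatorname{End}_C(R)\cong S/H$ with $S\subseteq M_n(C)$ the stabilizer of $K$, then invoke Amitsur--Levitzki for $M_n(C)$ and closure of PI under subrings and quotients) is essentially the standard proof behind that citation; all the bookkeeping you flag (well-definedness modulo $H$, multiplicativity, injectivity via $\lambda_r(1)=r$, and $H$ being an ideal of $S$) goes through routinely. One small remark: the parenthetical ``(central)'' is not needed and is not assumed in the statement---viewing $R$ as a right $C$-module, left multiplications are right $C$-linear and your construction works verbatim; when $C$ \emph{is} central (as in this paper, where $C=\mathbb{K}[x^l,y^l,z^l]$) one can even skip the matrix embedding, since the multilinear alternating identity $s_{n+1}$ already vanishes on any ring spanned by $n$ elements over a central commutative subring.
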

Now in the root of unity setting, the above proposition yields the following result.
\begin{theo} \label{finite}
 $\textit{A}(\alpha,\beta,\gamma)$ is a PI algebra if and only if $q^2(\neq 1)$ is a root of unity.
\end{theo}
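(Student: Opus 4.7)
The plan is to prove the two implications separately, leaning on the structural results for $A(\alpha,\beta,\gamma)$ already in place.

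For the ``if'' direction, assume $q^2$ is a primitive $l$-th root of unity. The corollary following the $q$-commutation identities already gives that $x^l$, $y^l$, and $z^l$ lie in the center of $A(\alpha,\beta,\gamma)$, so $C := \mathbb{K}[x^l, y^l, z^l]$ is a commutative subalgebra. I would then use the PBW basis $\{x^a y^b z^c : a,b,c \in \mathbb{Z}_{\geq 0}\}$ together with the Euclidean-style decomposition $x^a = (x^l)^{\lfloor a/l \rfloor} x^{a \bmod l}$ (and analogously for $y$ and $z$), combined with the centrality of $x^l, y^l, z^l$, to conclude that $A(\alpha,\beta,\gamma)$ is generated as a $C$-module by the $l^3$ monomials $\{x^a y^b z^c : 0 \le a,b,c < l\}$. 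Invoking Proposition~\ref{f} completes this direction.

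For the ``only if'' direction, I would prove the contrapositive. Assume $q^2$ is not a root of unity. The subalgebra $B := \mathbb{K}\langle x, y\rangle$ of $A(\alpha,\beta,\gamma)$ inherits, by the PBW basis for $A(\alpha,\beta,\gamma)$, the single defining relation $xy - q^2 yx = \alpha$, so it is isomorphic to the rank-one quantum Weyl algebra when $\alpha \neq 0$ and to the quantum plane $\mathbb{K}_{q^2}[x,y]$ when $\alpha = 0$. Neither of these is PI in the non-root-of-unity regime: a short monomial-coefficient calculation shows that the center of $B$ collapses to $\mathbb{K}$, while Posner's theorem forces a prime affine PI algebra to be module-finite over, and hence equal in Gelfand--Kirillov dimension to, its center, contradicting $\mathrm{GKdim}(B) = 2$. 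Since a subalgebra of a PI algebra is itself PI, $A(\alpha,\beta,\gamma)$ cannot be PI either.

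The only delicate point is the backward direction, where I must correctly identify $B$ via PBW as a standard rank-one quantum object and invoke its failure to be PI in the generic case; once this is verified, the rest is routine bookkeeping with the PBW basis and a direct application of Proposition~\ref{f}.
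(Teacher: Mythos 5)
Your ``if'' direction matches the paper's own argument: centrality of $x^l$, $y^l$, $z^l$ makes $\textit{A}(\alpha,\beta,\gamma)$ a finitely generated module over $\mathbb{K}[x^l,y^l,z^l]$, and Proposition~\ref{f} finishes. The PBW decomposition you describe is the correct justification of module-finiteness, which the paper leaves implicit.

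Your ``only if'' direction takes a different route from the paper, which simply exhibits the quantum-plane subalgebra generated by $z$ and $e$ (with $ez = q^{-2}ze$) and cites Brown--Goodearl for the fact that such an algebra is not PI at a non-root-of-unity parameter. Your choice of subalgebra $B := \mathbb{K}\langle x, y\rangle$ is equally valid, and the PBW basis does identify it as a rank-one quantum Weyl algebra (if $\alpha\neq 0$) or quantum plane (if $\alpha=0$), with center $\mathbb{K}$ in either case. The one flaw is your statement of what Posner's theorem yields: Theorem~\ref{pos} makes the \emph{central localization} $Q = BS^{-1}$ finite-dimensional over $F = Z(B)S^{-1}$, where $S = Z(B)\setminus\{0\}$; it does \emph{not} say that $B$ is module-finite over $Z(B)$, and prime affine PI algebras need not be module-finite over their centers in general. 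Your argument is rescued in this particular case precisely because $Z(B) = \mathbb{K}$ forces $S = \mathbb{K}^*$, so the localization is trivial, $Q = B$ and $F = \mathbb{K}$, and Posner then gives $\dim_\mathbb{K} B < \infty$ directly, contradicting the infinite PBW basis of $B$. Rephrase the Posner step this way (or invoke the GK-dimension comparison $\mathrm{GKdim}(B)=\mathrm{GKdim}(Z(B))$ only after you legitimately have $Q=B$) and the argument is sound.
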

\begin{proof}
Suppose $q^2$ is a primitive $l$-th root of unity. 
Then it is clear from Theorem \ref{cen} that the $\mathbb{K}[x^l,y^l,z^l]$ is a central subalgebra of $\textit{A}(\alpha,\beta,\gamma)$. This implies that $\textit{A}(\alpha,\beta,\gamma)$ becomes a finitely generated module over $\mathbb{K}[x^l,y^l,z^l]$. Therefore by Proposition \ref{f}, $\textit{A}(\alpha,\beta,\gamma)$ is a PI algebra.
\par For the converse part, if $q^2$ is not a root of unity, the subalgebra $\mathbb{K}\langle z,e: ez=q^2ze\rangle$ of $\textit{A}(\alpha,\beta,\gamma)$ is not PI algebra (cf. \cite[Proposition I.14.2.]{brg}).
\end{proof}
We now define the PI degree of prime PI-algebras. This definition will suffice because the algebra covered in this article is prime. We now recall one of the fundamental results from the Polynomial Identity theory.
\begin{theo}[Posner's Theorem {\cite[Theorem 13.6.5]{mcr}}]\label{pos}
 Let $A$ be a prime PI ring with center $Z(A)$ and minimal
degree $d$. Let $S=Z(A)\setminus\{0\}$, $Q=AS^{-1}$ and $F=Z(A)S^{-1}$. Then $Q$ is a central simple algebra with centre $F$ and $\dime_{F}(Q)=(\frac{d}{2})^2$.
\end{theo}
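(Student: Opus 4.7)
The plan is to realize $Q$ as the central localization of $A$, then show that $Q$ is simple with center $F$, and finally invoke Kaplansky's theorem on primitive PI rings to pin down the dimension.

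First I would verify that $S = Z(A) \setminus \{0\}$ is an Ore set of regular elements. Primeness of $A$ forces $Z(A)$ to be a commutative domain (hence $F$ is a field), and every central $s \ne 0$ is regular since $sr = 0$ would give $AsrA = 0$, contradicting primeness. The Ore condition is automatic because central elements commute with everything in $A$. Consequently the quotient ring $Q = AS^{-1}$ exists and contains $F = Z(A)S^{-1}$ as a subring.

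Next I would establish that $Q$ is simple with $Z(Q) = F$. The main input here is the existence of Formanek--Razmyslov central polynomials: for each $n$ there is a multilinear noncommutative polynomial that takes central values but does not vanish identically on $n \times n$ matrices. From this one derives Rowen's theorem, namely that every nonzero two-sided ideal $I$ of a prime PI ring meets $Z(A)$ nontrivially, obtained by evaluating a suitable central polynomial at elements of $I$ together with elements of $A$. Hence any nonzero two-sided ideal $J$ of $Q$ contracts to a nonzero ideal of $A$ that meets $S$, forcing $J = Q$. The equality $Z(Q) = F$ follows by writing any central element of $Q$ as $a s^{-1}$ and clearing denominators to see that $a$ must already be central in $A$.

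Finally I would apply Kaplansky's theorem: a primitive PI ring is a central simple algebra of dimension $(d'/2)^2$ over its center, where $d'$ is its minimal PI degree. Since $Q$ is simple it is primitive, and any PI of $A$ extends to $Q$ after clearing denominators while any PI of $Q$ restricts to $A$, so the minimal degrees coincide. This yields $\dime_F(Q) = (d/2)^2$. The principal obstacle in this program is the construction of central polynomials: producing an explicit multilinear polynomial that takes central but not identically zero values on every $M_n(k)$ requires a delicate combinatorial identity (the Formanek or Razmyslov polynomials). Once these are in hand, Rowen's theorem, the ideal-contraction argument, and Kaplansky's theorem assemble the remainder essentially mechanically.
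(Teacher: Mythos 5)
The paper states Posner's theorem as background, citing McConnell--Robson, and gives no proof of its own, so there is no internal argument to compare against. Your outline is essentially the standard modern proof (due to Rowen, resting on the Formanek--Razmyslov central polynomials and Kaplansky's theorem on primitive PI rings), and the steps you identify are correct: $S$ is a central Ore set of regular elements in a prime ring, so the central localization $Q$ exists with $F$ a field; Rowen's theorem that nonzero ideals of a prime PI ring meet the center forces $Q$ to be simple with $Z(Q)=F$; and Kaplansky's theorem then gives $\dim_F Q = (d'/2)^2$ where $d'$ is the minimal degree of $Q$. One point worth making explicit in the final step: the assertion that ``any PI of $A$ extends to $Q$ after clearing denominators'' really needs the identity to be multilinear so that the denominators factor out of each argument; since a PI of minimal degree can be replaced by a multilinear one of the same degree by the usual linearization process, this is harmless, but it should be said. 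You have also correctly isolated the genuinely hard ingredient, namely the existence of central polynomials for $M_n$; the remaining pieces (Ore localization, ideal contraction, Kaplansky) are routine once that is available.
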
 
\begin{defi}
The PI degree of a prime PI ring $A$ with minimal degree $d$ is $\pideg(A)=\frac{d}{2}$.
\end{defi}
The following result provides an important link between the PI degree of a prime affine PI algebra over an algebraically closed field and the $\mathbb{K}$-dimension of its irreducible representations (cf. \cite[Theorem I.13.5, Lemma III.1.2]{brg}). 
\begin{prop}\label{sim}
Let $A$ be a prime affine PI algebra over an algebraically closed field $\mathbb{K}$ and $V$ be a simple $A$-module. Then $V$ is a vector space over $\mathbb{K}$ of dimension $t$, where $t \leq \pideg (A)$. Moreover, the upper bound PI-deg($A$) is attained by simple $A$-modules. 
\end{prop}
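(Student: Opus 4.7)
The plan is to split Proposition \ref{sim} into the dimension bound and its attainment, using primitive quotients of $A$ together with Posner's theorem (Theorem \ref{pos}).

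For the bound, I take $V$ a simple $A$-module and set $P := \ann_A(V)$, so $V$ is a faithful simple module over the prime affine PI algebra $A/P$. By Kaplansky's theorem the primitive PI algebra $A/P$ is simple Artinian, hence of the form $M_n(D)$ with $D$ a division ring finite-dimensional over its center $K$. The Artin--Tate lemma in the PI setting forces $K$ to be an affine $\mathbb{K}$-algebra, and since it is a field, Zariski's lemma plus the algebraic closedness of $\mathbb{K}$ gives $K = \mathbb{K}$. As $D$ is finite-dimensional over the algebraically closed $\mathbb{K}$, this pushes $D = \mathbb{K}$, so $A/P \cong M_n(\mathbb{K})$ and $\dime_\mathbb{K} V = n$. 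The Amitsur--Levitzki theorem identifies the minimal degree of $M_n(\mathbb{K})$ as $2n$, so $\pideg(A/P) = n$; every identity of $A$ is also an identity of $A/P$, hence $n \leq \pideg(A)$.

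For attainment I aim to produce a primitive ideal $P$ with $A/P \cong M_{\pideg(A)}(\mathbb{K})$. Posner's theorem supplies the central simple algebra $Q = AS^{-1}$ of $F$-dimension $\pideg(A)^2$. Combining Artin--Tate with the existence of the reduced trace, one obtains the Azumaya locus inside $\mspect Z(A)$: a dense open subset on which the specializations $A \otimes_{Z(A)} (Z(A)/\mathfrak{m})$ are central simple algebras over $\mathbb{K}$ of the generic dimension $\pideg(A)^2$, and therefore isomorphic to $M_{\pideg(A)}(\mathbb{K})$ by algebraic closedness of $\mathbb{K}$. Any $\mathfrak{m}$ in this nonempty locus yields a simple $A$-module of dimension exactly $\pideg(A)$, as required.

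The main obstacle is establishing nonemptiness of the Azumaya locus, which really uses the full prime-PI structure theory (trace identities, discriminant ideals) rather than an elementary argument; in the paper one invokes \cite[Theorem I.13.5, Lemma III.1.2]{brg} instead of reproving this. The remaining ingredients---Kaplansky's theorem, Artin--Tate, Amitsur--Levitzki, and Posner's theorem---are standard and require no delicate adaptation to the present setting.
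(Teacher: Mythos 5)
The paper does not actually prove Proposition~\ref{sim}; it is stated with a pointer to \cite[Theorem I.13.5, Lemma III.1.2]{brg}, and no argument is supplied in the text. Your reconstruction is correct and follows precisely the route those references take. For the bound: passing to the primitive quotient $A/\ann_A(V)$, Kaplansky's theorem gives $M_n(D)$ with $D$ finite over its center $K$; Artin--Tate makes $K$ affine, the Nullstellensatz over the algebraically closed base forces $K=\mathbb{K}$ and then $D=\mathbb{K}$; $V$ is thus the $n$-dimensional simple $M_n(\mathbb{K})$-module, and Amitsur--Levitzki together with the fact that quotients inherit all identities of $A$ yields $n\le\pideg(A)$. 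For attainment, the appeal to the Azumaya locus (equivalently, the Formanek-center / Artin--Procesi picture after Posner) is the correct mechanism, and you are right that nonemptiness of that locus is the genuinely nontrivial input. Since the paper itself simply cites this result, your honest acknowledgment that the depth of the attainment step is delegated to \cite{brg} matches exactly what the paper does; there is no gap in your argument beyond the one you already name and source.
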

From Theorem $\ref{finite}$ and Proposition \ref{sim}, we conclude that if $M$ is a simple $\textit{A}(\alpha,\beta,\gamma)$-module, then $\dim_{\mathbb{K}}M \leq \pideg(\textit{A}(\alpha,\beta,\gamma))$.
\par The following few results will be useful.
\begin{prop}\emph{\cite[Proposition 7.1]{di}}\label{quan}
Let $\mathbf{q}=\left(q_{ij}\right)$ be an  $n \times n$ multiplicatively antisymmetric matrix over $\mathbb{K}$.
 Suppose that $q_{ij}=q^{h_{ij}}$ for all $i,j$, where $q \in \mathbb{K}^*$ is a primitive $l$-th root of unity and the $h_{ij} \in \mathbb{Z}$. Let $h$ be the cardinality of the image of the homomorphism 
\[
    \mathbb{Z}^n \xrightarrow{(h_{ij})} \mathbb{Z}^n \xrightarrow{\pi} \left(\mathbb{Z}/l\mathbb{Z}\right)^n,
\]
where $\pi$ denotes the canonical epimorphism. Then \[\pideg(\mathcal{O}_{\mathbf{q}}(\mathbb{K}^n))=\pideg(\mathcal{O}_{\mathbf{q}}\left(\mathbb{(K^*)}^n\right)=\sqrt{h}.\]
\end{prop}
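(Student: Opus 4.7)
The plan is to treat the quantum torus $\mathcal{O}_{\mathbf{q}}((\mathbb{K}^*)^n)$ first, then transfer the conclusion to the quantum affine space by an Ore-localization argument. The two ingredients are an explicit description of the center as the $\mathbb{K}$-span of those monomials whose exponent vectors lie in the kernel lattice $K$ of the map $\mathbb{Z}^n \xrightarrow{(h_{ij})} (\mathbb{Z}/l\mathbb{Z})^n$, and Posner's theorem (Theorem \ref{pos}) applied to the resulting free-module structure.

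First I would verify by a direct computation with the defining relations that for every monomial $x^{\mathbf{m}}=x_1^{m_1}\cdots x_n^{m_n}$ one has
\[
x_j\,x^{\mathbf{m}} \;=\; q^{\sum_i h_{ji}m_i}\,x^{\mathbf{m}}\,x_j,
\]
so that $x^{\mathbf{m}}$ is central if and only if $\mathbf{m} \in K$. Since the torus is $\mathbb{Z}^n$-graded and conjugation by each $x_j$ preserves homogeneous components by scaling, any central element must be a sum of central monomials; hence $\mathcal{Z} := Z(\mathcal{O}_{\mathbf{q}}((\mathbb{K}^*)^n)) = \bigoplus_{\mathbf{m}\in K}\mathbb{K}\,x^{\mathbf{m}}$. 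Choosing coset representatives $\mathbf{m}_1,\dots,\mathbf{m}_h$ for $\mathbb{Z}^n/K$ (whose order is $h$ by the first isomorphism theorem) exhibits the torus as a free $\mathcal{Z}$-module of rank $h$ with basis $\{x^{\mathbf{m}_i}\}$. Localizing at $\mathcal{Z}\setminus\{0\}$ yields an $F$-algebra $Q$ of dimension $h$ over $F:=\cf(\mathcal{Z})$, and the induced $\mathbb{Z}^n/K$-grading of $Q$ by a finite abelian group forces simplicity of $Q$. Posner's theorem then gives $\pideg\bigl(\mathcal{O}_{\mathbf{q}}((\mathbb{K}^*)^n)\bigr)=\sqrt{\dim_F Q}=\sqrt{h}$. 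The quantum affine space $\mathcal{O}_{\mathbf{q}}(\mathbb{K}^n)$ is a prime Noetherian subring whose Ore localization at the normal elements $x_1,\dots,x_n$ recovers the torus; since such a localization preserves the classical ring of fractions of a prime PI ring, the PI degrees agree.

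The main obstacle will be justifying the simplicity of $Q$ carefully. One has to argue that a nonzero two-sided ideal of $Q$ is automatically graded for the $\mathbb{Z}^n/K$-grading (using that the grading group is finite and each nonidentity graded component of $Q$ is spanned over $F$ by the unit $x^{\mathbf{m}_i}$) and that the identity component of $Q$ equals $F$. Both points follow from the freeness decomposition of the torus over $\mathcal{Z}$, but they are the core of the argument and should be verified explicitly rather than taken for granted; once they are in hand, the simple quotient structure and Posner's theorem deliver the PI degree with essentially no further work.
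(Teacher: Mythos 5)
Your overall strategy --- compute the center of the quantum torus as the span of monomials indexed by $K=\ker(\pi\circ(h_{ij}))$, exhibit the torus as free of rank $h$ over its center, localize and apply Posner's theorem (Theorem~\ref{pos}), then transfer to the quantum affine space by Ore localization --- is the standard proof of this De Concini--Procesi result (the paper cites it without proof, so there is no in-paper argument to compare against), and most of your steps are correct: the centrality criterion, the $\mathbb{Z}^n$-graded computation of $\mathcal{Z}$, the rank-$h$ free decomposition, the identification $\dim_F Q=h$, and the passage from torus to affine space via the common ring of fractions.

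The gap is in your justification of the simplicity of $Q$. You assert that every nonzero two-sided ideal of $Q$ is automatically graded for the $\mathbb{Z}^n/K$-grading and that this, together with $Q_{\bar 0}=F$, ``follows from the freeness decomposition of the torus over $\mathcal{Z}$.'' That implication is false: a grading by a finite group with identity component a field and every homogeneous component spanned by a unit does not force ideals to be graded. For instance $L[t]/(t^2-1)$ with the $\mathbb{Z}/2\mathbb{Z}$-grading $\deg t=1$ has $Q_{\bar 0}=L$ and $Q_{\bar 1}=Lt$ with $t$ a unit, yet when $L$ has characteristic different from $2$ it is isomorphic to $L\times L$ and has the non-graded proper ideal generated by $1+t$. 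What is actually needed is the non-degeneracy of the induced commutation pairing $b\colon(\mathbb{Z}^n/K)\times(\mathbb{Z}^n/K)\to\mathbb{K}^*$, $b(\overline{\mathbf{m}},\overline{\mathbf{m}}')=q^{\mathbf{m}^T H\mathbf{m}'}$; this is precisely a restatement of your first computation, since the radical of $b$ is the image in $\mathbb{Z}^n/K$ of $\{\mathbf{m}: x^{\mathbf{m}}\text{ central}\}=K$, i.e.\ the trivial subgroup. The correct simplicity argument takes a nonzero $a\in I$ of minimal homogeneous support, conjugates by the units $x^{\mathbf{m}}$ (which scales the $\overline{\mathbf{m}}'$-component of $a$ by $b(\overline{\mathbf{m}},\overline{\mathbf{m}}')$), and uses non-degeneracy to shrink the support to a single class, producing a unit in $I$ and hence $I=Q$. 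You have already proved the fact that makes this work; you simply need to invoke the centrality criterion here rather than appeal to freeness alone.
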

It is well known that a skew-symmetric matrix over $\mathbb{Z}$ such as our matrix $H:=(h_{ij})$ can be brought into a $2\times 2$ block diagonal form (commonly known as skew normal form) by an unimodular matrix $W\in GL_n(\mathbb{Z})$. Then $H$ is congruent to a block diagonal matrix of the form: 
\[WHW^{t}=\diagonal\left(\begin{pmatrix}
0&h_1\\
-h_1&0
\end{pmatrix},\cdots,\begin{pmatrix}
0&h_s\\
-h_s&0
\end{pmatrix},\bf{0}_{n-2s}\right),\]
where $\bf{0}_{n-2s}$ is the square matrix of zeros of dimension equals $\dime(\kere H)$, so that $2s=\ran(H)=n-\dime(\kere H)$ and $h_i\mid h_{i+1}\in \mathbb{Z}\setminus\{0\},~~\forall \ \ 1\leq i \leq s-1$. The nonzero $h_1,h_1,\cdots,h_s,h_s$ (each occurs twice) are called the invariant factors of $H$. The following result simplifies the calculation of $h$ in the statement of Proposition \ref{quan} by the properties of the integral matrix $H$, namely the dimension of its kernel, along with its invariant factors and the value of $l$.
\begin{lemm}\cite[Lemma 5.7]{ar}\label{mainpi}
Take $1\neq q\in \mathbb{K}^*$, a primitive $m$-th root of unity. Let $H$ be a skew symmetric integral matrix associated to $\mathbf{q}$ with invariant factors $h_1,h_1,\cdots,h_s,h_s$. Then PI degree of $\mathcal{O}_{\mathbf{q}}(\mathbb{K}^n)$ is given as \[\pideg(\mathcal{O}_{\mathbf{q}}(\mathbb{K}^n))=\pideg(\mathcal{O}_{\mathbf{q}}\left(\mathbb{(K^*)}^n\right)=\prod_{i=1}^{\frac{n-\dime(\kere H)}{2}}\frac{m}{\gcdi(h_i,m)}.\]
\end{lemm}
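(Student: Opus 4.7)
The plan is to apply Proposition \ref{quan}, which already gives $\pideg(\mathcal{O}_{\mathbf{q}}(\mathbb{K}^n)) = \sqrt{h}$, where $h = |(\pi \circ H)(\mathbb{Z}^n)|$ inside $(\mathbb{Z}/m\mathbb{Z})^n$. So the entire task reduces to computing $h$, which I would do by first reducing $H$ to its skew normal form and then counting the image block by block.

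First I would verify that $h$ is invariant under the congruence $H \mapsto WHW^t$ for any unimodular $W \in GL_n(\mathbb{Z})$. Since $W^t$ is a $\mathbb{Z}$-module automorphism of $\mathbb{Z}^n$, one has $(WHW^t)(\mathbb{Z}^n) = W \cdot H(\mathbb{Z}^n)$ as sublattices of $\mathbb{Z}^n$; reducing modulo $m$, the integer matrix $W$ induces a group automorphism of $(\mathbb{Z}/m\mathbb{Z})^n$ (since its determinant is a unit modulo $m$). Consequently $|\pi(H(\mathbb{Z}^n))| = |\pi(WHW^t(\mathbb{Z}^n))|$, so I may replace $H$ by its skew normal form as recalled in the excerpt.

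Next I would compute $h$ directly from the block diagonal form. Each nonzero antidiagonal $2\times 2$ block with entries $\pm h_i$ contributes the sublattice $h_i\mathbb{Z} \oplus h_i\mathbb{Z}$ to the column image in the corresponding $\mathbb{Z}^2$, while the zero block of size $\dime(\kere H) = n - 2s$ contributes only $\{0\}$. Since $h_i\mathbb{Z} + m\mathbb{Z} = \gcdi(h_i,m)\mathbb{Z}$, the image of $h_i\mathbb{Z}$ in $\mathbb{Z}/m\mathbb{Z}$ is cyclic of order $m/\gcdi(h_i,m)$. Multiplying contributions across the $s$ nonzero blocks gives
\begin{equation*}
h = \prod_{i=1}^{s} \left(\frac{m}{\gcdi(h_i, m)}\right)^{2},
\end{equation*}
and taking square roots yields the formula in the lemma.

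The main obstacle I anticipate is the invariance step: although $H$ encodes a bilinear form (so congruence is the natural equivalence relation), the invariant $h$ is really attached to the column lattice of $H$ considered modulo $m$, which is preserved by both left and right unimodular multiplication. Once this distinction between similarity and congruence is clarified, the rest of the argument is a direct blockwise application of the structure theorem for finitely generated abelian groups.
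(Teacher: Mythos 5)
Your proof is correct, and it follows the standard route: apply Proposition \ref{quan} to reduce the problem to computing $h = |\pi(H(\mathbb{Z}^n))|$, show this cardinality is invariant under $H \mapsto WHW^t$ for $W \in GL_n(\mathbb{Z})$ (both left and right unimodular multiplication preserve the image modulo $m$ up to a bijection of $(\mathbb{Z}/m\mathbb{Z})^n$), pass to the skew normal form, and count the image blockwise using $|h_i\mathbb{Z}/(h_i\mathbb{Z}\cap m\mathbb{Z})| = m/\gcdi(h_i,m)$. Note that the paper does not give a proof of this lemma at all; it simply cites \cite[Lemma 5.7]{ar}, and the argument given there is essentially the one you have reconstructed. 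One small stylistic remark: you could streamline the invariance step by observing that since $H$ is skew-symmetric the column lattice equals the row lattice (up to sign), so the distinction you flag between congruence and left/right multiplication, while worth being careful about, does not introduce any real subtlety here.
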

\subsection*{PI degree of $\textit{A}(\alpha,\beta,\gamma)$:} Now we aim to compute an explicit expression of PI-deg for $\textit{A}(\alpha,\beta,\gamma)$ in the following. First, we start by localizing $\textit{A}(\alpha,\beta,\gamma)$ with respect to the Ore set $S:=\{x^id^jq^{2k}: i,j \in \mathbb{N},k \in \mathbb{Z}\}$. Then from \cite[Proposition 2.1]{vbav} we have
\begin{equation}\label{spi}
    S^{-1}\textit{A}(\alpha,\beta,\gamma)=\mathbb{K}[\Omega] \otimes B
\end{equation} where $B=\mathbb{K}[d^{\pm1}][x^{\pm 1};\sigma]$ and $\sigma(d)=q^2d$. Observe that \[\textit{A}(\alpha,\beta,\gamma)\subset S^{-1}\textit{A}(\alpha,\beta,\gamma)\subset \cf \textit{A}(\alpha,\beta,\gamma)\] and \[\mathbb{K}[\Omega,d][x,\sigma] \subset S^{-1}\textit{A}(\alpha,\beta,\gamma) \subset \cf \left(\mathbb{K}[\Omega,d][x,\sigma]\right).\] Hence it follows from \cite[Corollary I.13.3]{brg} that \[\pideg(\textit{A}(\alpha,\beta,\gamma))=\pideg(S^{-1}\textit{A}(\alpha,\beta,\gamma))=\pideg\left(\mathbb{K}[\Omega,d][x,\sigma]\right).\] From Lemma \ref{mainpi}, we have \[\pideg(\textit{A}(\alpha,\beta,\gamma))=\pideg\left(\mathbb{K}[\Omega,d][x,\sigma]\right)=\displaystyle\frac{\ord(q)}{\gcd(2,\ord(q))}=\ord(q^2).\] We conclude this section with the following theorem.
\begin{theo}\label{pideg}
    Let $q^2$ be a primitive $l$-th root of unity with $l>1$. Then the PI degree of $\textit{A}(\alpha,\beta,\gamma)$ is given by $\pideg(\textit{A}(\alpha,\beta,\gamma))=l$. 
\end{theo}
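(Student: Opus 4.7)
The plan is to reduce the computation to a simple quantum plane via Ore localization and then invoke the general PI-degree formula for quantum affine spaces. First I would apply the identification recorded in (\ref{spi}), namely $S^{-1}\textit{A}(\alpha,\beta,\gamma) \cong \mathbb{K}[\Omega] \otimes B$ with $B = \mathbb{K}[d^{\pm 1}][x^{\pm 1}; \sigma]$ and $\sigma(d) = q^2 d$. Since PI degree is preserved under Ore localization by \cite[Corollary I.13.3]{brg}, this gives $\pideg(\textit{A}(\alpha,\beta,\gamma)) = \pideg(S^{-1}\textit{A}(\alpha,\beta,\gamma))$.

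Next I would use the containment sandwich
\[\mathbb{K}[\Omega, d][x; \sigma] \subset S^{-1}\textit{A}(\alpha,\beta,\gamma) \subset \cf\bigl(\mathbb{K}[\Omega, d][x; \sigma]\bigr),\]
and apply \cite[Corollary I.13.3]{brg} once more to conclude that all three prime PI algebras share the same PI degree. Because $\Omega$ is central, the left-hand algebra is a quantum affine space in the three variables $\Omega$, $d$, $x$ whose only non-trivial commutation relation is $xd = q^2 dx$, so Lemma \ref{mainpi} applies directly.

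The associated skew-symmetric integer matrix $H$ is already in block form with a one-dimensional kernel (coming from the central generator $\Omega$) and a single pair of invariant factors equal to $2$. Writing $m = \ord(q)$, Lemma \ref{mainpi} then produces
\[\pideg\bigl(\mathbb{K}[\Omega, d][x; \sigma]\bigr) = \frac{m}{\gcdi(2,m)} = \ord(q^2) = l,\]
which finishes the proof.

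The only (mild) obstacle is a consistency check between the two natural base orders: the lemma is stated in terms of $m = \ord(q)$, whereas the hypothesis of the theorem is phrased in terms of $q^2$. One must verify that the quotient $m/\gcdi(2,m)$ collapses to $\ord(q^2)$ in both the odd and even cases of $\ord(q)$, which it does, so the final answer is uniformly $l$.
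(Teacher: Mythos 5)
Your proposal is correct and follows essentially the same route as the paper: localize at $S$, use the decomposition $S^{-1}\textit{A}(\alpha,\beta,\gamma)=\mathbb{K}[\Omega]\otimes B$ and the sandwich $\mathbb{K}[\Omega,d][x;\sigma]\subset S^{-1}\textit{A}(\alpha,\beta,\gamma)\subset \cf\left(\mathbb{K}[\Omega,d][x;\sigma]\right)$ with \cite[Corollary I.13.3]{brg}, then apply Lemma \ref{mainpi} to the quantum affine space with invariant factor $2$ to get $\ord(q)/\gcdi(2,\ord(q))=\ord(q^2)=l$. Your closing consistency check between $\ord(q)$ and $\ord(q^2)$ is exactly the point the paper's computation relies on.
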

\section{Classification of Simple $\textit{A}(\alpha, \beta, \gamma)$-modules}
Suppose that $\mathbb{K}$ is an algebraically closed field and $q^2$ is a primitive $l$-th root of unity in $\mathbb{K}$. In the root of unity setting, the algebra $\textit{A}(\alpha, \beta, \gamma)$ is classified as a prime affine PI algebra. Suppose $N$ be a simple $\textit{A}(\alpha, \beta, \gamma)$-module. Then by Proposition \ref{sim} along with Theorem \ref{pideg}, the $\mathbb{K}$-dimension of $N$ is finite and bounded above by the $\pideg \textit{A}(\alpha, \beta, \gamma)=l$. As the element $z^l$ is in the center of $\textit{A}(\alpha, \beta, \gamma)$, then by Schur's lemma, the action of $z$ on $N$ is either invertible or nilpotent. Based on this observation, the classification can be divided into the following cases:
\begin{enumerate}
    \item Simple $\textit{A}(\alpha, \beta, \gamma)$-modules with invertible action of $z$
    \item Simple $\textit{A}(\alpha, \beta, \gamma)$-modules with nilpotent action of $z$
\end{enumerate}
In the subsequent sections, we will focus on the classification process of each case.
\section{Simple $\textit{A}(\alpha, \beta, \gamma)$-modules with invertible action of $z$}
\subsection{Construction of simple $\textit{A}(\alpha, \beta, \gamma)$-modules}\label{s1} In this subsection we wish to construct simple $\textit{A}(\alpha, \beta, \gamma)$-modules with invertible action of $z$.
\par \textbf{Simple modules of type $M_1(\underline{\mu})$:} For $\underline{\mu}:=(\mu_1,\mu_2,\mu_3)\in \mathbb{({K}^*)}^2 \times \mathbb{K}$, let us consider the $\mathbb{K}$-vector space $M_1(\underline{\mu})$ with basis $\{m_{k}:0 \leq k \leq l-1\}$. Define an $\textit{A}(\alpha, \beta, \gamma)$-module structure on the $\mathbb{K}$-space $M_1(\underline{\mu})$ as follows: 
\begin{align*}
 m_kx&= \mu_1^{-1}\mu_2^{-1}q^{-2(k-1)}\left[q^{2k}\mu_3-\displaystyle\frac{q^{2k}-1}{q^2-1}\left(q^{2k-2}\alpha\mu_2^2-\frac{\beta\gamma q^2}{q^2-1}\right)\right]m_{k-1}+\displaystyle\frac{\mu_2^{-1}\beta q^2}{q^2-1}q^{-2k}m_k,\\
 m_ky&=\mu_1\mu_2^{-1}q^{-2(k+1)}m_{k+1}-\displaystyle\frac{\mu_2^{-1}\gamma}{q^2-1}q^{-2k}m_k\\
 m_kz&=q^{2k}\mu_2m_k.\end{align*}
   In order to establish the well-definedness we need to check that the $\mathbb{K}$-endomorphisms of $M_1(\underline{\mu})$ defined by the above rules satisfy the relations. Indeed with the above actions we have the following computations:
\begin{align*}
 &(m_k)xy\\
 &= \mu_1^{-1}\mu_2^{-1}q^{-2(k-1)}\left[q^{2k}\mu_3-\displaystyle\frac{q^{2k}-1}{q^2-1}\left(q^{2k-2}\alpha\mu_2^2-\frac{\beta\gamma q^2}{q^2-1}\right)\right]\left(m_{k-1}y\right)+\displaystyle\frac{\mu_2^{-1}\beta q^2}{q^2-1}q^{-2k}\left(m_ky\right)\\
 &=\displaystyle\frac{\mu_1\mu_2^{-2}\beta}{q^2-1}q^{-4k}m_{k+1}-\frac{\mu_1^{-1}\mu_2^{-2}\gamma q^{-4k+4}}{q^2-1}\left[q^{2k}\mu_3-\frac{q^{2k}-1}{q^2-1}\left(q^{2k-2}\alpha\mu_2^2-\frac{\beta\gamma q^2}{q^2-1}\right)\right]m_{k-1}\\
 &\ \ \ \ \ +\left[\mu_3\mu_2^{-2}q^{-2k+2}-\displaystyle\frac{1-q^{-2k}}{q^2-1}\alpha+\frac{\mu_2^{-2}\beta\gamma q^{-4k+2}}{(q^2-1)^2}\left(q^{4k}-2q^2-1\right)\right]m_k
\end{align*}
Again
\begin{align*}
    &(m_k)\left(q^2yx+\alpha\right)\\
    &=\mu_1\mu_2^{-1}q^{-2k}\left(m_{k+1}x\right)-\displaystyle\frac{\mu_2^{-1}\gamma}{q^2-1}q^{-2k+2}\left(m_kx\right)+\alpha m_k\\
  &=\displaystyle\frac{\mu_1\mu_2^{-2}\beta}{q^2-1}q^{-4k}m_{k+1}-\frac{\mu_1^{-1}\mu_2^{-2}\gamma q^{-4k+4}}{q^2-1}\left[q^{2k}\mu_3-\frac{q^{2k}-1}{q^2-1}\left(q^{2k-2}\alpha\mu_2^2-\frac{\beta\gamma q^2}{q^2-1}\right)\right]m_{k-1}\\
 &\ \ \ \ +\left[\mu_3\mu_2^{-2}q^{-2k+2}-\displaystyle\frac{1-q^{-2k}}{q^2-1}\alpha+\frac{\mu_2^{-2}\beta\gamma q^{-4k+2}}{(q^2-1)^2}\left(q^{4k}-2q^2-1\right)\right]m_k
\end{align*}
Similarly
\begin{align*}
(m_k)xz&=\mu_1^{-1}\mu_2^{-1}q^{-2(k-1)}\left[q^{2k}\mu_3-\displaystyle\frac{q^{2k}-1}{q^2-1}\left(q^{2k-2}\alpha\mu_2^2-\frac{\beta\gamma q^2}{q^2-1}\right)\right]\left(m_{k-1}z\right)\\& \ \ \ \ \ +\displaystyle\frac{\mu_2^{-1}\beta q^2}{q^2-1}q^{-2k}\left(m_kz\right)\\
&=\mu_1^{-1}\left[q^{2k}\mu_3-\displaystyle\frac{q^{2k}-1}{q^2-1}\left(q^{2k-2}\alpha\mu_2^2-\frac{\beta\gamma q^2}{q^2-1}\right)\right]m_{k-1}+\frac{\beta q^2}{q^2-1}m_k.
\end{align*}
Also 
\begin{align*}
    (m_k)\left(q^{-2}zx+\beta\right)&=q^{2k-2}\mu_2\left(m_kx\right)+\beta m_k\\
    &=\mu_1^{-1}\left[q^{2k}\mu_3-\displaystyle\frac{q^{2k}-1}{q^2-1}\left(q^{2k-2}\alpha\mu_2^2-\frac{\beta\gamma q^2}{q^2-1}\right)\right]m_{k-1}+\frac{\beta q^2}{q^2-1}m_k.
\end{align*}
Finally \[m_k\left(yz\right)=\mu_1\mu_2^{-1}q^{-2(k+1)}\left(m_{k+1}z\right)-\displaystyle\frac{\mu_2^{-1}\gamma}{q^2-1}q^{-2k}\left(m_kz\right)
    =\mu_1m_{k+1}-\displaystyle\frac{\gamma}{q^2-1}m_k.\]
and \[m_k\left(q^2zy+\gamma\right)=q^{2k+2}\mu_2\left(m_ky\right)+\gamma m_k=\mu_1m_{k+1}-\displaystyle\frac{\gamma}{q^2-1}m_k.\]
Thus $M_1(\underline{\mu})$ is an $\textit{A}(\alpha,\beta,\gamma)$-module.
 \begin{theo}\label{f1}
The module $M_1(\underline{\mu})$ is a simple $\textit{A}(\alpha,\beta,\gamma)$-module of dimension $l$.
\end{theo}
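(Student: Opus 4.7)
The dimension claim is immediate from the construction: $M_1(\underline{\mu})$ is defined as the $\mathbb{K}$-span of the basis $\{m_0,\dots,m_{l-1}\}$, so $\dime_{\mathbb{K}} M_1(\underline{\mu}) = l$. The substance of the theorem is simplicity, and the plan is to exploit the diagonal $z$-action together with the fact that $y$ shifts the basis index.

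The first step is to observe that $z$ acts diagonally on $M_1(\underline{\mu})$ with eigenvalues $q^{2k}\mu_2$ for $0 \leq k \leq l-1$. Since $q^2$ is a primitive $l$-th root of unity and $\mu_2 \in \mathbb{K}^*$, these $l$ scalars are pairwise distinct, so each eigenspace is one-dimensional and spanned by a single $m_k$. Let $N$ be a nonzero $A(\alpha,\beta,\gamma)$-submodule of $M_1(\underline{\mu})$. Since $z$ stabilises $N$ and acts semisimply with distinct eigenvalues, $N$ decomposes as the direct sum of its intersections with the $z$-eigenspaces; in particular $N$ must contain at least one basis vector $m_k$. (Equivalently, from any nonzero element of $N$ one can isolate a nonzero $m_k$ by applying a polynomial in $z$ of Vandermonde type.)

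The second step is to propagate from a single $m_k \in N$ to all other basis vectors by means of the $y$-action. From the defining formula
\[
m_k y = \mu_1\mu_2^{-1}q^{-2(k+1)} m_{k+1} - \frac{\mu_2^{-1}\gamma}{q^2-1}q^{-2k} m_k,
\]
the coefficient $\mu_1\mu_2^{-1}q^{-2(k+1)}$ of $m_{k+1}$ is nonzero because $\mu_1,\mu_2 \in \mathbb{K}^*$. Hence $m_{k+1}$ lies in the $\mathbb{K}$-span of $\{m_k y,\, m_k\} \subset N$, and so $m_{k+1} \in N$. Iterating this argument (with indices read cyclically modulo $l$, as is forced by the fact that $y^l$ is central and must act by a scalar on the $z$-eigenspaces) yields $m_j \in N$ for every $0 \leq j \leq l-1$, hence $N = M_1(\underline{\mu})$.

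The routine parts are the two observations above; the only mild care required is the index convention at $k=l-1$, which is consistent because the given actions, together with the centrality of $z^l$ and $y^l$ established in Section~2, force the basis-shift by $y$ to close up cyclically. Consequently $M_1(\underline{\mu})$ has no proper nonzero submodule and is simple of dimension $l$, matching the PI-degree bound from Theorem~\ref{pideg} and Proposition~\ref{sim}.
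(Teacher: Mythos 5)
Your argument is correct and follows essentially the same route as the paper: exploit the pairwise-distinct $z$-eigenvalues $q^{2k}\mu_2$ to show any nonzero submodule contains some $m_k$ (the paper does this by a length-reduction induction using $pz-\Lambda_u p$, which is the same idea as your Vandermonde/semisimplicity observation), and then generate the remaining basis vectors from the module action. You make the propagation step via the $y$-action explicit, including the nonzero shift coefficient $\mu_1\mu_2^{-1}q^{-2(k+1)}$ and the cyclic index convention at $k=l-1$, whereas the paper compresses this to ``by the actions of $x,y$ and $z$''; both are fine.
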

\begin{proof}
Let $P$ be a non-zero submodule of $M_1(\underline{\mu})$. We claim that $P$ contains a basis vector of the form $m_k$. Indeed, any member $p\in P$ is a finite $\mathbb{K}$-linear combination of such vectors. i.e.,
\[
  p:=\sum_{\text{finite}} \lambda_k~m_k  
\]
for some $\lambda_k\in \mathbb{K}$. Suppose there exist two non-zero coefficients, say, $\lambda_u,\lambda_v$ with $1\leq u,v \leq l-1$. Then in particular focus on the two distinct vectors $m_u$ and $m_v$.
\par Then the vectors $m_u$ and $m_v$ are eigenvectors of $z$ associated with the eigenvalues
$\Lambda_u=\mu_2q^{2u}$ and $\Lambda_v=\mu_2q^{2v}$ respectively. In fact $\Lambda_u$ and $\Lambda_v$ are distinct. Indeed,
\begin{equation}\label{coo}
    \Lambda_u=\Lambda_v\implies l \mid (u-v),
\end{equation} which is a contradiction. 
\par Now $pz-\Lambda_up$ is a nonzero element in $P$ of smaller length than $p$. Hence by induction, it follows that every nonzero submodule of $M_1(\underline{\mu})$ contains a basis vector of the form $m_k$. Thus $M_1(\underline{\mu})$ is a simple $\textit{A}(\alpha,\beta,\gamma)$-module by the actions of $x,y$. and $z$.
\end{proof}
The following theorem provides a criterion for determining when two distinct modules of this kind are isomorphic.
\begin{theo}
    Let $\underline{\mu}=(\mu_1,\mu_2,\mu_3)$ and $\underline{\mu}'=(\mu'_1,\mu'_2,\mu'_3)$ belong to ${(\mathbb{K}^*)}^2 \times \mathbb{K}$. Then $M_1(\underline{\mu})\cong M_1(\underline{\mu}')$ if and only if $\mu_1^l={\mu'_1}^l$ and there exists some $i$ with $0 \leq i \leq l-1$ such that 
    \[\mu_2=q^{2i}\mu'_2\ \ \text{and}\ \ 
         \mu_3=q^{2i}\mu'_3+\displaystyle\frac{q^{2i}-1}{q^2-1}\left(\frac{\beta\gamma q^2}{q^2-1}-q^{2i-2}\alpha{\mu'_2}^2\right).\]
\end{theo}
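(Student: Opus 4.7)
The plan is to pin down the form of any $\textit{A}(\alpha,\beta,\gamma)$-module isomorphism $\phi: M_1(\underline{\mu}) \to M_1(\underline{\mu}')$ using the diagonal action of $z$, and then extract the three stated conditions from matching the actions of $y$ and $x$. First I would observe that on $M_1(\underline{\mu})$ the element $z$ has $l$ distinct eigenvalues $q^{2k}\mu_2$, with one-dimensional eigenspaces spanned by the $m_k$ ($0 \leq k \leq l-1$), and similarly for $M_1(\underline{\mu}')$. Since $z^l$ is central, Schur's lemma applied to $\phi$ forces $\mu_2^l = (\mu_2')^l$; as $q^2$ is a primitive $l$-th root of unity, this gives $\mu_2 = q^{2i}\mu_2'$ for a unique $i \in \{0, 1, \ldots, l-1\}$. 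Matching eigenspaces then forces $\phi(m_k) = c_k\, m'_{k+i\bmod l}$ for some nonzero scalars $c_k$.

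Next I would impose the relation $\phi(m_k y) = \phi(m_k) y$. Under the substitution $\mu_2 = q^{2i}\mu_2'$, the diagonal coefficient on $m'_{k+i}$ matches automatically, while the off-diagonal coefficient on $m'_{k+i+1}$ yields the recurrence $c_{k+1} = (\mu_1'/\mu_1)\, c_k$. Iterating gives $c_k = c_0 (\mu_1'/\mu_1)^k$, and the cyclic consistency $c_l = c_0$ produces the condition $\mu_1^l = (\mu_1')^l$. An analogous analysis with $\phi(m_k x) = \phi(m_k) x$ leaves the diagonal coefficient automatic (again by $\mu_2 = q^{2i}\mu_2'$) and reduces the off-diagonal coefficient, after dividing by $c_{k-1}$ and applying $c_k/c_{k-1} = \mu_1'/\mu_1$, to a scalar equation in $\mu_3$ and $\mu_3'$. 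Evaluating this equation at $k = 0$ produces exactly the stated formula for $\mu_3$.

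The main obstacle is checking that this single relation, derived at $k=0$, already forces the $x$-compatibility at every other $k$. I plan to handle this by introducing $u := q^{2k}$, $v := q^{2i}$ and using $\mu_2^2 = q^{4i}(\mu_2')^2$, which collapses the general-$k$ equation to an elementary polynomial identity in $u$ and $v$ that can be verified term-by-term after expanding. For the converse direction, given the three conditions I would set $\phi(m_k) := (\mu_1'/\mu_1)^k\, m'_{k+i\bmod l}$ (fixing any one $l$-th root of $\mu_1/\mu_1'$, which exists by $\mu_1^l = (\mu_1')^l$) and run all the computations above in reverse: the three hypotheses are precisely the compatibilities required for $\phi$ to commute with the actions of $z$, $y$, and $x$ respectively, and $\phi$ is manifestly a $\mathbb{K}$-linear bijection, so it is an isomorphism of $\textit{A}(\alpha,\beta,\gamma)$-modules.
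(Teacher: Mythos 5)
Your proposal is correct and takes essentially the same route as the paper: both pin down the shape of $\phi$ via the one-dimensional $z$-eigenspaces (the paper phrases this as $\phi$ being determined by $\phi(m_0)$ and then using $\phi(m_0 z)=\phi(m_0)z$), extract $\mu_1^l=(\mu_1')^l$ from a cyclic consistency (the paper probes with $f^l$, you with the $y$-recurrence $c_{k+1}=(\mu_1'/\mu_1)c_k$, which are the same calculation since $m_k f=\mu_1 m_{k+1}$), and extract the $\mu_3$ relation from the lowering coefficient (the paper via $ef$, you via $x$ at $k=0$). Your verification that the $x$-compatibility is $k$-independent does go through — after substituting $\mu_2^2=q^{4i}(\mu_2')^2$ the $u=q^{2k}$ terms cancel identically — and your converse map coincides with the paper's $\psi(m_k)=(\mu_1^{-1}\mu_1')^k m_{k\oplus i}$; the parenthetical about fixing an $l$-th root of $\mu_1/\mu_1'$ is unnecessary, as $\mu_1'/\mu_1$ is already a definite scalar.
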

\begin{proof}
    Let $\phi:M_1(\underline{\mu})\rightarrow M_1(\underline{\mu}')$ be a module homomorphism. Note that as 
    \[m_k=\mu_1^{-k}m_0f^k \in M_1(\underline{\mu}),\] the homomorphism $\phi$ can be uniquely determined by $\phi(m_0)$. Suppose
    \[\phi(m_0)=\sum_{p \in \mathcal{I}}\lambda_pm_p\] where $\mathcal{I}$ is a finite subset of $\{0,\ldots,l-1\}$ with $\lambda_p\neq 0$ for all $p\in\mathcal{I}$. Now the equality $\phi(m_0z)=\phi(m_0)z$ gives $\mu_2-\mu'_2q^{2p}=0$ for all $\ p \in \mathcal{I}$. 
This implies the index set $\mathcal{I}$ must be singleton. 
Therefore $\phi(m_0)=\lambda_im_i$ for some $i$ with $0 \leq i \leq l-1$.
Then the equality $\phi(m_0z)=\phi(m_0)z$ gives $\mu_2=q^{2i}\mu'_2$. Similarly the equalities $\phi(m_0f^l)=\phi(m_0)f^l$ and $\phi(m_0ef)=\phi(m_0)ef$ give $\mu_1^l={\mu'_1}^l$ and  $\mu_3=q^{2i}\mu'_3+\displaystyle\frac{q^{2i}-1}{q^2-1}\left(\frac{\beta\gamma q^2}{q^2-1}-q^{2i-2}{\alpha\mu'_2}^2\right)$ respectively.
\par Conversely assume the relations between $\underline{\mu}=(\mu_1,\mu_2,\mu_3)$ and $\underline{\mu'}=(\mu'_1,\mu'_2,\mu'_3)$ holds. Then the $\mathbb{K}$-linear map $\psi:M_1(\underline{\mu})\rightarrow M_1(\underline{\mu}')$ defined by $\psi(m_k)={(\mu_1^{-1}\mu'_1)}^km_{k\oplus i}$ where $\oplus$
denotes the addition modulo $l$ is an $\textit{A}(\alpha,\beta,\gamma)$-module isomorphism.
\end{proof}
\par \textbf{Simple modules of type $M_2(\underline{\mu})$:} For $\underline{\mu}:=(\mu_1,\mu_2)\in \mathbb{({K}^*)}^2$, let us consider the $\mathbb{K}$-vector space $M_2(\underline{\mu})$ with basis $\{m_{k}:0 \leq k \leq l-1\}$. Define an $\textit{A}(\alpha, \beta, \gamma)$-module structure on the $\mathbb{K}$-space $M_2(\underline{\mu})$ as follows: 
\begin{align*}
m_kx&=\mu_1\mu_2^{-1}q^{2(k+1)}m_{k+1}+\displaystyle\frac{\beta\mu_2^{-1}q^2}{q^2-1}q^{2k}m_k\\
 m_ky&=\begin{cases}\mu_1^{-1}\mu_2^{-1}q^{2(k-1)}\displaystyle\frac{1-q^{-2k}}{1-q^{-2}}\left[\alpha\mu_2^2q^{-2k}-\frac{\beta\gamma}{q^2-1}\right]m_{k-1}-\frac{\gamma\mu_2^{-1}}{q^2-1}q^{2k}m_k,& k \neq 0\\
    -\displaystyle\frac{\gamma\mu_2^{-1}}{q^2-1}q^{2k}m_k,& k= 0 
 \end{cases}\\
 m_kz&=q^{-2k}\mu_2m_k.\end{align*}
 We can readily confirm that the action above defines an $\textit{A}(\alpha,\beta,\gamma)$-module, similarly as detailed in Subsection \ref{s1}. 
 \begin{theo}\label{f2}
The module $M_2(\underline{\mu})$ is a simple $\textit{A}(\alpha,\beta,\gamma)$-module of dimension $l$.
\end{theo}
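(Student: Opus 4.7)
The plan is to mirror the proof of Theorem \ref{f1} essentially verbatim, since the structure of $M_2(\underline{\mu})$ is parallel to that of $M_1(\underline{\mu})$. The key structural fact is that on $M_2(\underline{\mu})$ the generator $z$ acts diagonally on the ordered basis by $m_k z = \mu_2 q^{-2k} m_k$, and the scalars $\{\mu_2 q^{-2k} : 0 \leq k \leq l-1\}$ are pairwise distinct because $q^2$ is a primitive $l$-th root of unity.

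First I would take an arbitrary nonzero submodule $P \subseteq M_2(\underline{\mu})$ and pick $0 \neq p = \sum_{k \in \mathcal{I}} \lambda_k m_k \in P$ whose support $\mathcal{I}$ is minimal (as a finite subset of $\{0,\dots,l-1\}$ with $\lambda_k \neq 0$ for $k \in \mathcal{I}$). Assuming $|\mathcal{I}| \geq 2$, fix any $u \in \mathcal{I}$; then $p(z - \mu_2 q^{-2u})$ still lies in $P$, is nonzero, and has strictly smaller support, because the $m_u$-component is annihilated while the remaining coefficients are scaled by the nonzero scalars $\mu_2(q^{-2k} - q^{-2u})$. This contradicts minimality, so $\mathcal{I}$ must be a singleton and some basis vector $m_k$ lies in $P$.

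Next I would show that a single $m_k$ generates the entire module under $x$. From the definition $m_k x = \mu_1 \mu_2^{-1} q^{2(k+1)} m_{k+1} + c_k\, m_k$ for a scalar $c_k$, and the coefficient $\mu_1 \mu_2^{-1} q^{2(k+1)}$ is nonzero because $\mu_1, \mu_2 \in \mathbb{K}^*$. Subtracting an appropriate multiple of $m_k$ thus yields $m_{k+1} \in P$ (with indices read cyclically modulo $l$, exactly as in the corresponding step for $M_1(\underline{\mu})$, and consistent with the centrality of $x^l$). Iterating recovers all of $m_0, \ldots, m_{l-1}$, hence $P = M_2(\underline{\mu})$. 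The module is therefore simple, and its $\mathbb{K}$-dimension equals $l$ by construction.

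I do not anticipate any genuine obstacle: the proof is a direct transcription of the argument for $M_1(\underline{\mu})$, where the sign on the $z$-eigenvalues is flipped from $q^{2k}$ to $q^{-2k}$ and the cycling role previously played by $y$ is now played by $x$. The only verifications are that the eigenvalues of $z$ remain pairwise distinct (immediate from primitivity of $q^2$) and that the raising coefficient in $m_k x$ is nonzero (immediate from $\mu_1,\mu_2 \in \mathbb{K}^*$).
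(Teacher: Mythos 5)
Your proof is correct and follows exactly the route the paper indicates: identify the basis vectors as eigenvectors of $z$ with pairwise distinct eigenvalues $\mu_2 q^{-2k}$ (distinct because $q^2$ is a primitive $l$-th root of unity), deduce that any nonzero submodule contains some $m_k$, and then use the $x$-action (with nonzero raising coefficient $\mu_1\mu_2^{-1}q^{2(k+1)}$, indices read cyclically mod $l$) to generate the whole module. The paper's own proof simply says it is parallel to Theorem~\ref{f1}; you have correctly fleshed out that parallel, including the observation that the cycling role played by $y$ in $M_1(\underline{\mu})$ is played by $x$ in $M_2(\underline{\mu})$.
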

\begin{proof}
Note that the vector $m_k\in M_2(\underline{\mu})$ is an eigenvector of $z$ associated with the eigenvalues $\mu_2q^{-2k}$. With this fact, the proof is parallel to the proof of Theorem $\ref{f1}$.  
\end{proof}
The following theorem offers a criterion to ascertain the isomorphism between two distinct modules of this type.
\begin{theo}
    Let $\underline{\mu}=(\mu_1,\mu_2)$ and $\underline{\mu}'=(\mu'_1,\mu'_2)$ belong to ${(\mathbb{K}^*)}^2$. Then $M_2(\underline{\mu})\cong M_2(\underline{\mu}')$ if and only if $\mu_1^l={\mu'_1}^l$ and $\mu_2=\mu'_2$. 
\end{theo}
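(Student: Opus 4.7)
The plan is to mirror the strategy of the preceding isomorphism theorem for $M_1(\underline{\mu})$, now with $e := xz - q^2\beta/(q^2-1)$ playing the role that $f$ did there. A direct calculation from the module actions shows that $m_k e = \mu_1 m_{k+1}$ in $M_2(\underline{\mu})$, hence $m_k = \mu_1^{-k} m_0 e^k$ for every $k$; thus any module homomorphism $\phi: M_2(\underline{\mu}) \to M_2(\underline{\mu}')$ is determined by the single image $\phi(m_0)$.

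For the forward direction, write $\phi(m_0) = \sum_{p \in \mathcal{I}} \lambda_p m_p$ with $\lambda_p \neq 0$ on a finite $\mathcal{I} \subseteq \{0,\ldots,l-1\}$. The relation $\phi(m_0 z) = \phi(m_0) z$ yields $\lambda_p(\mu_2 - \mu'_2 q^{-2p}) = 0$ for each $p$; since the scalars $\mu'_2 q^{-2p}$ are pairwise distinct as $p$ varies, $\mathcal{I}$ collapses to a singleton $\{i\}$ with $\mu_2 = \mu'_2 q^{-2i}$. To pin down $i = 0$, I would exploit the identity $m_0 y = -\gamma/(\mu_2(q^2-1))\, m_0$: under $\phi(m_0 y) = \phi(m_0) y$ the image $\lambda_i m_i$ must itself be a $y$-eigenvector, but for $i \neq 0$, $m_i y$ contains an $m_{i-1}$-term whose coefficient is proportional to $\alpha {\mu'_2}^2 q^{-2i} - \beta\gamma/(q^2-1)$; requiring this coefficient to vanish forces $i = 0$, hence $\mu_2 = \mu'_2$. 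Finally, the equation $\phi(m_0 e^l) = \phi(m_0) e^l$, combined with $m_0 e^l = \mu_1^l m_0$ in $M_2(\underline{\mu})$ and the analogous formula in $M_2(\underline{\mu}')$, yields $\mu_1^l = {\mu'_1}^l$.

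For the converse direction, I would define $\psi: M_2(\underline{\mu}) \to M_2(\underline{\mu}')$ by $\psi(m_k) := (\mu'_1/\mu_1)^k m_k$ and perform a routine verification that $\psi$ intertwines the actions of $x, y, z$: the hypothesis $\mu_2 = \mu'_2$ handles the $y$- and $z$-actions, while $(\mu'_1/\mu_1)^l = 1$, equivalently $\mu_1^l = {\mu'_1}^l$, ensures the $x$-action is consistent at the wraparound $m_{l-1} x$, which produces a multiple of $m_0$.

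The main obstacle is the step ruling out $i \neq 0$ in the degenerate regime where $\alpha {\mu'_2}^2 q^{-2i} = \beta\gamma/(q^2-1)$ happens for some $i \in \{1,\ldots,l-1\}$, since in that case $m_i$ is itself a $y$-eigenvector in $M_2(\underline{\mu}')$ and a shifted candidate isomorphism looks viable. Here I would appeal to an additional invariant to distinguish $m_0$ from $m_i$ — for instance, the action of $f := yz + \gamma/(q^2-1)$ on the kernel side (we have $m_0 f = 0$) or a comparison of the scalars by which the central elements $y^l$ and $z^l$ act — to close the argument.
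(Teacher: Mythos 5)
Your strategy sensibly mirrors the paper's explicit argument for the $M_1$ isomorphism criterion, and indeed the paper gives no separate proof for the $M_2$ case. Your derivation that the support of $\phi(m_0)$ collapses to a singleton $\{i\}$ with $\mu_2 = q^{-2i}\mu'_2$ is correct, and you rightly flag that the step pinning $i=0$ is in danger in the degenerate regime $F_{\alpha,\beta,\gamma}(i,\mu'_2)=\alpha q^{-2i}{\mu'_2}^2 - \beta\gamma/(q^2-1)=0$.

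However, the proposed rescue via ``an additional invariant'' cannot succeed, because in that regime the theorem itself fails. In $M_2(\underline{\mu}')$ the kernel of $f$ is then at least two-dimensional (both $m_0$ and $m_i$ are killed, since $m_kf$ carries the factor $(1-q^{-2k})F_{\alpha,\beta,\gamma}(k,\mu'_2)$), so it cannot single out $m_0$; and the central elements $y^l,z^l$ act by the same scalar on the whole module, so they separate nothing. In fact, setting $\mu_2=q^{-2i}\mu'_2$ with $F_{\alpha,\beta,\gamma}(i,\mu'_2)=0$ and $\mu_1^l={\mu'_1}^l$, the shift $\psi(m_k):=(\mu'_1/\mu_1)^k\,m_{k\oplus i}$ (indices mod $l$) intertwines the $z$-, $e$- and $f$-actions: the $z$- and $e$-compatibilities follow as in your converse argument, and the $f$-compatibility reduces to the identity
\[
(1-q^{-2k})F_{\alpha,\beta,\gamma}(k,\mu_2)=(1-q^{-2(k+i)})F_{\alpha,\beta,\gamma}(k+i,\mu'_2),
\]
which holds for all $k$ precisely because $F_{\alpha,\beta,\gamma}(i,\mu'_2)=0$ (both sides equal $-\alpha{\mu'_2}^2q^{-2i}(1-q^{-2k})(1-q^{-2(k+i)})$). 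Since $x$ and $y$ act on the module as polynomials in $e$, $f$, $z^{\pm1}$, $\psi$ is a module isomorphism with $\mu_2\neq\mu'_2$. A fully concrete instance is $l=2$, $q^2=-1$, $\alpha\beta\gamma\neq 0$, ${\mu'_2}^2=\beta\gamma/(2\alpha)$, $\mu_2=-\mu'_2$, $\mu_1=-\mu'_1$: one checks directly that $n_0\mapsto m_1$, $n_1\mapsto -m_0$ commutes with $x,y,z$. So the stated necessity of $\mu_2=\mu'_2$ is incorrect; the right criterion must, as in the $M_1$ case, allow a shift by $i$, constrained here by $i=0$ or $F_{\alpha,\beta,\gamma}(i,\mu'_2)=0$.
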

 \textbf{Simple modules of type $M_3({\mu},s)$:} For ${\mu}\in \mathbb{{K}^*}$, define \[F_{\alpha,\beta,\gamma}(k,\mu):=\alpha q^{-2k}\mu^2-\frac{\beta\gamma}{q^2-1},\ \ 1\leq k\leq l\] and \[s:=\begin{cases}
     1,&\alpha=\beta\gamma=0\\
     k,&\alpha,\beta,\gamma\in\mathbb{K}^*\ \text{and}\  F_{\alpha,\beta,\gamma}(k,\mu)=0\\
     l,& \text{otherwise}
 \end{cases}\] Consider the $\mathbb{K}$-vector space $M_3({\mu},s)$ with basis $\{m_{k}:0 \leq k \leq s-1\}$. Define an $\textit{A}(\alpha, \beta, \gamma)$-module structure on the $\mathbb{K}$-space $M_3({\mu},s)$ as follows: 
\begin{align*}
m_kx&=\begin{cases}
    \mu^{-1}q^{2(k+1)}m_{k+1}+\displaystyle\frac{\beta\mu^{-1}}{q^2-1}q^{2k+2}m_k,& k\neq s-1\\
    \displaystyle\frac{\beta\mu^{-1}}{q^2-1}q^{2k+2}m_k,& k= s-1
\end{cases}\\
m_ky&=\begin{cases}\mu^{-1}q^{2(k-1)}\displaystyle\frac{1-q^{-2k}}{1-q^{-2}}\left[\alpha\mu^2q^{-2k}-\frac{\beta\gamma}{q^2-1}\right]m_{k-1}-\frac{\gamma\mu^{-1}}{q^2-1}q^{2k}m_k,& k \neq 0\\
    -\displaystyle\frac{\gamma\mu^{-1}}{q^2-1}q^{2k}m_k,& k= 0 
 \end{cases}\\
 m_kz&=q^{-2k}\mu m_k.
\end{align*} The above action indeed defines an $\textit{A}(\alpha,\beta,\gamma)$-module.
\begin{theo}\label{f3.1}
The module $M_3({\mu},s)$ is a simple $\textit{A}(\alpha,\beta,\gamma)$-module of dimension $s$.
\end{theo}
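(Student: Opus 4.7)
My plan is to adapt the proof of Theorem \ref{f1}. The dimension claim is immediate from the definition of $M_3(\mu, s)$ as a vector space with basis $\{m_0, \ldots, m_{s-1}\}$, so the real content is simplicity. To prove this, I will show in two stages that any nonzero submodule $P$ coincides with $M_3(\mu, s)$: first, that some basis vector $m_k$ must lie in $P$, and second, that from any single $m_k$ one can reach every other basis vector through the actions of $x$ and $y$.

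For the first stage, I will reuse the $z$-eigenvalue argument from the proof of Theorem \ref{f1}. Each $m_k$ is an eigenvector of $z$ with eigenvalue $q^{-2k}\mu$, and because $q^2$ is a primitive $l$-th root of unity and $0 \le k \le s-1 \le l-1$, these eigenvalues are pairwise distinct. Given a nonzero $p = \sum_{k \in \mathcal{J}} \lambda_k m_k \in P$ with $|\mathcal{J}| \ge 2$, selecting any $u \in \mathcal{J}$ the element $pz - q^{-2u}\mu\, p \in P$ is nonzero and supported on $\mathcal{J}\setminus\{u\}$. Induction on the support size then produces a basis vector in $P$.

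For the second stage, the actions of $x$ and $y$ allow me to shift indices. Subtracting the appropriate scalar multiples of $m_k$ gives, for $k \le s-2$,
\[
m_k x \;-\; \frac{\beta\mu^{-1}q^{2k+2}}{q^2-1}\, m_k \;=\; \mu^{-1} q^{2(k+1)}\, m_{k+1} \;\in\; P,
\]
and, for $k \ge 1$,
\[
m_k y \;+\; \frac{\gamma \mu^{-1} q^{2k}}{q^2-1}\, m_k \;=\; \mu^{-1} q^{2(k-1)} \frac{1-q^{-2k}}{1-q^{-2}}\, F_{\alpha,\beta,\gamma}(k,\mu)\, m_{k-1}.
\]
The prefactor $(1 - q^{-2k})/(1 - q^{-2})$ is nonzero for $1 \le k \le l-1$, so moving downward requires $F_{\alpha,\beta,\gamma}(k, \mu) \neq 0$ for all $1 \le k \le s-1$.

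The main obstacle is verifying this non-vanishing uniformly across the three cases defining $s$. If $s = 1$, the range is empty. If $\alpha,\beta,\gamma \in \mathbb{K}^*$ and $s$ is the integer for which $F_{\alpha,\beta,\gamma}(s,\mu)=0$, then since $q^{-2k}$ assumes $l$ distinct values as $k$ varies over $\{1,\ldots,l\}$, the equation $F_{\alpha,\beta,\gamma}(k,\mu)=0$ has at most one solution there, hence $F_{\alpha,\beta,\gamma}(k,\mu) \neq 0$ for $1 \le k \le s-1$. In the remaining case $s=l$, I will split on whether $\alpha=0$ (then $\beta\gamma\neq 0$ by the exclusion of the first case, so $F_{\alpha,\beta,\gamma}(k,\mu)=-\beta\gamma/(q^2-1)\neq 0$), whether $\beta\gamma=0$ with $\alpha\neq 0$ (then $F_{\alpha,\beta,\gamma}(k,\mu)=\alpha q^{-2k}\mu^2\neq 0$), or all three nonzero with $F_{\alpha,\beta,\gamma}$ admitting no zero on $\{1,\ldots,l\}$ by exclusion of the second case. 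In each subcase the required non-vanishing holds, which completes the plan.
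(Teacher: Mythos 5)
Your proof is correct and follows essentially the same route as the paper's: the paper merely notes that each $m_k$ is a $z$-eigenvector with eigenvalue $\mu q^{-2k}$ and defers to the eigenvalue-separation argument of Theorem \ref{f1}. Your case analysis verifying that $F_{\alpha,\beta,\gamma}(k,\mu)\neq 0$ for $1\le k\le s-1$ (so that the $y$-action lets one descend from any $m_k$ to $m_0$) correctly supplies the generation step that the paper leaves implicit.
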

\begin{proof}
Note that the vector $m_k\in M_3({\mu},s)$ is an eigenvector of $z$ associated with the eigenvalues $\mu q^{-2k}$. With this fact, the proof is parallel to the proof of Theorem $\ref{f1}$.  
\end{proof}
Finally, we have the following theorem determining the isomorphism class of modules of type $M_3({\mu},s)$
\begin{theo}
    Let ${\mu}$ and $\mu'$ belong to ${\mathbb{K}^*}$. Then $M_3({\mu},s)\cong M_3({\mu}',s)$ if and only if $\mu=\mu'$. 
\end{theo}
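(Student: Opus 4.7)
My plan is to exploit the element $e = xz - \frac{q^2\beta}{q^2-1}$ (introduced in Section 2) as a nilpotent shift operator on the basis of $M_3(\mu, s)$. The first step will be a direct computation establishing that
\[ m_k \cdot e = m_{k+1} \text{ for } 0 \leq k \leq s-2, \qquad m_{s-1}\cdot e = 0. \]
Indeed, starting from the defining formulas for $m_k x$ and $m_k z$, the $\beta$-contribution produced by the diagonal term of $m_k x$ cancels with $\frac{q^2\beta}{q^2-1}m_k$ after multiplying by $z$, leaving only the shift; at the boundary $k=s-1$ the truncated $x$-action combines with the $z$-eigenvalue $\mu q^{-2(s-1)}$ to give exactly zero. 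The identical identities hold in $M_3(\mu', s)$. As a consequence, $m_0$ is a cyclic vector for $e$ with $m_0 e^{s-1} = m_{s-1} \neq 0$ and $m_0 e^s = 0$ on both modules.

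The converse direction of the theorem is immediate (the identity is an isomorphism when $\mu = \mu'$), so I focus on the forward direction. Assume $\phi : M_3(\mu, s) \to M_3(\mu', s)$ is an $\textit{A}(\alpha,\beta,\gamma)$-module isomorphism. From $m_0 z = \mu m_0$, the image $\phi(m_0)$ is a nonzero $z$-eigenvector in $M_3(\mu', s)$ with eigenvalue $\mu$. The $z$-eigenvalues on $M_3(\mu', s)$ are $\mu' q^{-2k}$ for $0 \leq k \leq s-1$; these are pairwise distinct because $s \leq l$ and $q^2$ is a primitive $l$-th root of unity. Hence each $z$-eigenspace is one-dimensional, forcing the existence of a unique $i \in \{0, 1, \ldots, s-1\}$ with $\mu = \mu' q^{-2i}$ and $\phi(m_0) = \lambda m_i$ for some $\lambda \in \mathbb{K}^*$.

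To conclude, I will apply $e^{s-1}$ to both sides. On one hand, $\phi(m_0) e^{s-1} = \phi(m_0 e^{s-1}) = \phi(m_{s-1}) \neq 0$ by injectivity. On the other hand, $\lambda m_i e^{s-1} = \lambda m_{i+s-1}$, which vanishes whenever $i \geq 1$ (since then $i+s-1 \geq s$). This forces $i = 0$, and therefore $\mu = \mu'$. The main technical obstacle will be the shift computation of the first paragraph, particularly the clean cancellation ensuring $m_{s-1}e = 0$ at the boundary; once this lemma is in place the rest of the argument is essentially formal, and notably avoids the more intricate case analysis (on $\alpha$, $\beta$, $\gamma$, and $F_{\alpha,\beta,\gamma}(k,\mu)$) that the definition of $s$ might otherwise invite.
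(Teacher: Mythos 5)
Your proof is correct, and it follows essentially the same route the paper uses for the analogous isomorphism criterion for $M_1(\underline{\mu})$: pin down $\phi(m_0)=\lambda m_i$ via the pairwise distinct $z$-eigenvalues, then rule out $i\geq 1$ with a suitable power of a shift operator. Your key identity $m_ke=m_{k+1}$ for $k\leq s-2$ and $m_{s-1}e=0$ checks out directly from the defining action (and is consistent with the paper's Case~III construction, where $m_k$ corresponds to $ue^k$), so applying $e^{s-1}$ does force $i=0$ and hence $\mu=\mu'$.
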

\begin{rema} We have the following observations.
\begin{enumerate}
\item $f^l$ does not annihilate the simple $\textit{A}(\alpha,\beta,\gamma)$-module $M_1(\underline{\mu})$.
\item $f^l$ annihilates the simple $\textit{A}(\alpha,\beta,\gamma)$-module $M_2(\underline{\mu})$, but $e^l$ does not.
\item Both $f^l$ and $e^l$ annihilate the simple $\textit{A}(\alpha,\beta,\gamma)$-module $M_3({\mu},s)$.
\end{enumerate}
Thus the three types of simple $\textit{A}(\alpha,\beta,\gamma)$-modules are non-isomorphic.
\end{rema}
\subsection{{Classification of simple $\textit{A}(\alpha,\beta,\gamma)$-modules}}
This subsection aims to classify simple $\textit{A}(\alpha,\beta,\gamma)$-modules with invertible action of $z$. 
Suppose $N$ is a simple module over $\textit{A}(\alpha,\beta,\gamma)$ with $Nz\neq 0$. 
Observe that each of the elements 
\begin{equation}\label{eq1}
 f^l,\ e^l,\ ef,\ z   
\end{equation}
commute in $\textit{A}(\alpha,\beta,\gamma)$. Since $N$ is finite-dimensional, there is a common eigenvector $v$ in $N$ of the operators (\ref{eq1}) such that 
\begin{equation}\label{coev}
vf^l=\eta_1v,\ ve^l=\eta_2v,\ vef=\eta_3v,\ vz=\eta_4v    
\end{equation}
for some $\eta_1,\eta_2,\eta_3 \in \mathbb{K}$ and $\eta_4\in\mathbb{K}^*$. 
Then the following cases arise depending on the scalars $\eta_1$ and $\eta_2$.\\
\textbf{Case I:} Suppose that $\eta_1 \neq 0$. Then the set $\{vf^k:0 \leq k \leq l-1\}$ consists of nonzero vectors of $N$. Let us choose
\[\mu_1:=\eta_1^{\frac{1}{l}},\ \mu_2:=\eta_4,\ \mu_3:=\eta_3 \] so that 
$\underline{\mu}=(\mu_1,\mu_2,\mu_3)\in {(\mathbb{K}^*)}^2 \times \mathbb{K}$. Now define a $\mathbb{K}$-linear map \[\Phi_1:M_1(\underline{\mu})\rightarrow N\] by specifying the image of the basis vectors of $M_1(\underline{\mu})$ only
\[\Phi_1(m_k):=\mu_1^{-k}vf^k\]One can easily verify that $\Phi_1$ is a non zero $\textit{A}(\alpha,\beta,\gamma)$-module homomorphism. In this verification the following computations will be very useful:
\begin{align*}
\left(vf^k\right)x&=\eta_4^{-1}q^{-2k}(vf^k)zx\\&=\eta_4^{-1}q^{-2k}\left(vf^k\right)\left(q^2e+\displaystyle\frac{q^2\beta}{q^2-1}\right)\\&=\begin{cases}
q^{-2(k-1)}\eta_4^{-1}\left[q^{2k}\eta_3-\displaystyle\frac{q^{2k}-1}{q^2-1}\left(q^{2k-2}\alpha\eta_4^2-\displaystyle\frac{\beta\gamma q^2}{q^2-1}\right)\right]\left(vf^{k-1}\right)\\ \hspace{2cm}+\displaystyle\frac{q^2\beta}{q^2-1}q^{-2k}\eta_4^{-1}\left(vf^k\right),&k \neq 0 \\
 \eta_1^{-1}\eta_4^{-1}\eta_3q^2\left(vf^{l-1}\right)+\displaystyle\frac{q^2\beta}{q^2-1}\eta_4^{-1}\left(v\right),&k= 0
\end{cases}\\
\left(vf^k\right)y&=\eta_4^{-1}q^{-2k}(vf^k)zy\\&=\eta_4^{-1}q^{-2k-2}\left(vf^k\right)\left(f-\displaystyle\frac{q^2\gamma}{q^2-1}\right)\\&=\begin{cases}
\eta_4^{-1}q^{-2(k+1)}\left(vf^{k+1}\right)-\displaystyle\frac{\gamma}{q^2-1}\eta_4^{-1}q^{-2k}\left(vf^k\right),& k\neq l-1\\
  \eta_1\eta_4^{-1}\left(v\right)-\displaystyle\frac{q^2\gamma}{q^2-1}\eta_4^{-1}\left(vf^{l-1}\right),& k=l-1.
  \end{cases}
\end{align*}
Thus by Schur's lemma, $\Phi_1$ is an isomorphism because $M_1(\underline{\mu})$ and $N$ are both simple $\textit{A}(\alpha,\beta,\gamma)$-modules.\\
\textbf{Case II:} $\eta_1=0$ and $\eta_2 \neq 0$. Since $\eta_1 =0$ there exists $0\leq p\leq l-1$ such that $vf^{p}\neq 0$ and $vf^{p+1}=0$. Define $u:=vf^{p}\neq 0$. Then from (\ref{coev}) we obtain \[ue^l=\eta_2u, \ uz=q^{2p}\eta_4u.\] Note that as $\eta_2\neq 0$, the set $\{ue^k:0 \leq k \leq l-1\}$ consists of nonzero vectors of $N$. Let us choose
\[\mu_1:=\eta_2^{\frac{1}{l}},\ \mu_2:=q^{2p}\eta_4 \] so that 
$\underline{\mu}=(\mu_1,\mu_2)\in {(\mathbb{K}^*)}^2 $. Now define a $\mathbb{K}$-linear map \[\Phi_2:M_2(\underline{\mu})\rightarrow N\] by specifying the image of the basis vectors of $M_2(\underline{\mu})$ only
\[\Phi_2(m_k):=\mu_1^{-k}ue^k\]One can easily verify that $\Phi_2$ is a non-zero $\textit{A}(\alpha,\beta,\gamma)$-module homomorphism. In this verification the following computations will be very useful:
\begin{align*}
\left(ue^k\right)x&=(q^{2p}\eta_4)^{-1}q^{2k}(ue^k)zx\\&=(q^{2p}\eta_4)^{-1}q^{2k}\left(ue^k\right)\left(q^2e+\displaystyle\frac{q^2\beta}{q^2-1}\right)\\&=\begin{cases}
q^{2(k+1)}q^{-2p}\eta_4^{-1}\left(ue^{k+1}\right)+\displaystyle\frac{q^2\beta}{q^2-1}q^{2k}q^{-2p}\eta_4^{-1}\left(ue^k\right),&k \neq l-1 \\
 \eta_2\eta_4^{-1}q^{-2p}\left(u\right)+\displaystyle\frac{\beta}{q^2-1}q^{-2p}\eta_4^{-1}\left(ue^{l-1}\right),&k= l-1.
\end{cases}\\
\left(ue^k\right)y&=(q^{2p}\eta_4)^{-1}q^{2k}(ue^k)zy\\&=(q^{2p}\eta_4)^{-1}q^{2k-2}\left(ue^k\right)\left(f-\displaystyle\frac{q^2\gamma}{q^2-1}\right)\\&=\begin{cases}
q^{2(k-1)}q^{-2p}\eta_4^{-1}\displaystyle\frac{1-q^{-2k}}{1-q^{-2}}\left[q^{-2k}q^{4p}\eta_4^2\alpha-\frac{\beta\gamma}{q^2-1}\right]\left(ue^{k-1}\right)\\ \hspace{2cm}-\displaystyle\frac{\gamma}{q^2-1}q^{2k}q^{-2p}\eta_4^{-1}\left(ue^k\right),& k\neq 0\\
-\displaystyle\frac{\gamma}{q^2-1}q^{-2p}\eta_4^{-1}\left(u\right),& k=0.
  \end{cases}
\end{align*}
Again by Schur's lemma, $\Phi_2$ is an isomorphism because $M_2(\underline{\mu})$ and $N$ are both simple $\textit{A}(\alpha,\beta,\gamma)$-modules.\\
\textbf{Case III:}  $\eta_1=0$ and $\eta_2 =0$. Since $\eta_1 =0$ there exists $0\leq p\leq l-1$ such that $vf^{p}\neq 0$ and $vf^{p+1}=0$. Define $u:=vf^{p}\neq 0$. Then form (\ref{coev}) we compute
\[ue^l=\eta_2u, \ uz=q^{2p}\eta_4u.\]
Suppose $k$ be the smallest integer with $1 \leq k \leq l$ such that $ue^{k-1} \neq 0$ and $ue^k=0$. Then after simplifying the equality $(ue^k)f=0$ we have
\begin{align*}
      0=(ue^k)f&=u\Big( q^{-2k}fe^k+q^{-2k}\displaystyle\frac{1-q^{-2k}}{1-q^{-2}}\alpha z^2e^{k-1}-\frac{\beta\gamma}{q^2-1}\frac{1-q^{-2k}}{1-q^{-2}}e^{k-1}\Big)\\
    &=\displaystyle\frac{1-q^{-2k}}{1-q^{-2}}\left(\alpha q^{-2k}q^{4p}\eta_4^2-\frac{\beta\gamma}{q^2-1}\right)ue^{k-1}.
    \end{align*}
This implies \[(1-q^{-2k})F_{\alpha,\beta,\gamma}(k,q^{4p}\eta_4^2)=0.\] Therefore we set \[s:=\begin{cases}
     1,&\alpha=\beta\gamma=0\\
     k,&\alpha,\beta,\gamma\in\mathbb{K}^*\ \text{and}\  F_{\alpha,\beta,\gamma}(k,q^{4p}\eta_4^2)=0\\
     l,& \text{otherwise}
\end{cases}\]
For such $1\leq s\leq l$, it follows that the vectors $ue^r$, where $0 \leq r\leq s-1$ of $N$ are non-zero. Take $\mu:=q^{2p}\eta_4 \in \mathbb{K}^* $. Now define a $\mathbb{K}$-linear map \[\Phi_3:M_3({\mu},s)\rightarrow N\] by specifying the image of the basis vectors of $M_3({\mu},s)$ only
\[\Phi_3(m_k):=ue^k,\ 0\leq k\leq s-1.\]
One can easily verify that $\Phi_3$ is a non-zero $\textit{A}(\alpha,\beta,\gamma)$-module homomorphism. In this verification the following computations will be very useful:
\begin{align*}
\left(ue^k\right)x&=(q^{2p}\eta_4)^{-1}q^{2k}(ue^k)zx\\&=(q^{2p}\eta_4)^{-1}q^{2k}\left(ue^k\right)\left(q^2e+\displaystyle\frac{q^2\beta}{q^2-1}\right)\\&=\begin{cases}
q^{2(k+1)}q^{-2p}\eta_4^{-1}\left(ue^{k+1}\right)+\displaystyle\frac{q^{2k+2}\beta}{q^2-1}q^{-2p}\eta_4^{-1}\left(ue^k\right),&k \neq s-1 \\
\displaystyle\frac{q^{2k+2}\beta}{q^2-1}q^{-2p}\eta_4^{-1}\left(ue^{k}\right),&k= s-1.
\end{cases}\\
\left(ue^k\right)y&=(q^{2p}\eta_4)^{-1}q^{2k}(ue^k)zy\\&=(q^{2p}\eta_4)^{-1}q^{2k-2}\left(ue^k\right)\left(f-\displaystyle\frac{q^2\gamma}{q^2-1}\right)\\&=\begin{cases}
q^{2(k-1)}q^{-2p}\eta_4^{-1}\displaystyle\frac{1-q^{-2k}}{1-q^{-2}}\left[q^{-2k}q^{4p}\eta_4^2\alpha-\frac{\beta\gamma}{q^2-1}\right]\left(ue^{k-1}\right)\\ \hspace{2cm}-\displaystyle\frac{\gamma}{q^2-1}q^{2k}q^{-2p}\eta_4^{-1}\left(ue^k\right),& k\neq 0\\
-\displaystyle\frac{\gamma}{q^2-1}q^{-2p}\eta_4^{-1}\left(u\right),& k=0.
  \end{cases}
\end{align*}
Again by Schur's lemma, $\Phi_3$ is an isomorphism because $M_3({\mu},s)$ and $N$ are both simple $\textit{A}(\alpha,\beta,\gamma)$-modules.
\par Finally the above discussions lead us to one of the main results of this section which provides an opportunity for the classification of simple $\textit{A}(\alpha,\beta,\gamma)$-modules in terms of scalar parameters.
\begin{theo}\label{main1}
Let $q^2$ be a primitive $l$-th root of unity. Then each simple $\textit{A}(\alpha,\beta,\gamma)$-module with invertible action of $z$ is isomorphic to one of the following simple $\textit{A}(\alpha,\beta,\gamma)$-modules:
\begin{itemize}
    \item $M_1(\underline{\mu})$ for some $\underline{\mu}:=(\mu_1,\mu_2,\mu_3)\in \mathbb{({K}^*)}^2 \times \mathbb{K}$,
    \item $M_2(\underline{\mu})$ for some $\underline{\mu}:=(\mu_1,\mu_2)\in \mathbb{({K}^*)}^2$,
    \item $M_3({\mu},s)$ for some ${\mu}\in \mathbb{{K}^*}$ and $1\leq s\leq l$.
\end{itemize}
\end{theo}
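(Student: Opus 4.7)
The plan is to start with an arbitrary simple $A(\alpha,\beta,\gamma)$-module $N$ on which $z$ acts invertibly. Since $A(\alpha,\beta,\gamma)$ is a prime affine PI-algebra of PI-degree $l$ over the algebraically closed field $\mathbb{K}$, Proposition \ref{sim} together with Theorem \ref{pideg} forces $\dim_{\mathbb{K}} N \leq l$. The four elements $f^l,\, e^l,\, ef,\, z$ pairwise commute in $A(\alpha,\beta,\gamma)$ (which follows from $ez = q^{-2}ze$ and $fz = q^2zf$), and consequently admit a common eigenvector $v \in N$ with eigenvalues $\eta_1,\eta_2,\eta_3 \in \mathbb{K}$ and $\eta_4 \in \mathbb{K}^*$.

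I would split the analysis into three cases based on the vanishing of $\eta_1$ and $\eta_2$. In each case I construct an explicit nonzero $A(\alpha,\beta,\gamma)$-homomorphism into $N$ from one of the modules $M_1,M_2,M_3$ already built in Subsection~\ref{s1}; Schur's lemma then upgrades this map to an isomorphism because both source and target are simple. In Case~I ($\eta_1 \neq 0$) the vectors $vf^k$ for $0 \leq k \leq l-1$ are all nonzero because $vf^l = \eta_1 v$; setting $\mu_1 := \eta_1^{1/l}$, $\mu_2 := \eta_4$, $\mu_3 := \eta_3$ yields $\Phi_1(m_k) := \mu_1^{-k} vf^k$ targeting $M_1(\underline{\mu})$. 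In Case~II ($\eta_1 = 0$ but $\eta_2 \neq 0$) I replace $v$ by $u := vf^p$, where $p$ is the largest index with $vf^p \neq 0$; the vector $u$ remains a common eigenvector of $e^l, ef, z$, with the $z$-eigenvalue twisted to $q^{2p}\eta_4$, and setting $\mu_1 := \eta_2^{1/l}$, $\mu_2 := q^{2p}\eta_4$ gives $\Phi_2(m_k) := \mu_1^{-k} ue^k$ into $M_2(\underline{\mu})$. In Case~III ($\eta_1 = \eta_2 = 0$) the same $u$ additionally satisfies $ue^l = 0$, so there is a smallest $1 \leq s \leq l$ with $ue^s = 0$; expanding $(ue^s)f = 0$ via the identity $fe^s = q^{2s}e^s f - \frac{q^{-2s}-1}{q^{-2}-1}\alpha z^2 e^{s-1} + \frac{q^2\beta\gamma(q^{2s}-1)}{(q^2-1)^2}e^{s-1}$ from the commutation theorem forces $(1-q^{-2s})F_{\alpha,\beta,\gamma}(s, q^{4p}\eta_4^2) = 0$, which matches the trichotomy in the definition of $M_3(\mu,s)$ with $\mu := q^{2p}\eta_4$.

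The verifications that each $\Phi_i$ intertwines the $x$- and $y$-actions reduce, via the rewritings $zx = q^2e + \frac{q^2\beta}{q^2-1}$ and $yz = f - \frac{\gamma}{q^2-1}$ combined with invertibility of $z$, to the same manipulations already used when the module structures on the $M_i$ were installed. I expect the main obstacle to be the bookkeeping in Case~III: one must carefully check that the integer $s$ produced by the nilpotency of $e$ on $u$ is exactly the integer appearing in the definition of $M_3(\mu,s)$ (namely $s=1$ when $\alpha = \beta\gamma = 0$, $s = k$ when $F_{\alpha,\beta,\gamma}(k,\mu)$ vanishes for some $k$, and $s=l$ otherwise), and also that no additional constraints on $\eta_3$ slip into Cases~II and~III beyond what the construction of $M_2$ and $M_3$ already accommodates---this follows because on $u$ the element $ef$ acts by a scalar determined by the relation $fe = q^2 ef - \alpha z^2 + \frac{q^2\beta\gamma}{q^2-1}$ applied to the nilpotent chain, and so the $ef$-eigenvalue is redundant once $\mu$ and $s$ are fixed.
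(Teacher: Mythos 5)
Your proposal is correct and follows essentially the same route as the paper: the same common eigenvector of $f^l$, $e^l$, $ef$, $z$, the same trichotomy on $(\eta_1,\eta_2)$, the same maps $\Phi_i(m_k)$ built from $vf^k$ or $ue^k$, and the same use of Schur's lemma to upgrade each nonzero homomorphism to an isomorphism. Your closing remarks on matching $s$ with the definition of $M_3(\mu,s)$ and on the redundancy of the $ef$-eigenvalue are exactly the bookkeeping the paper carries out in Case~III.
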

\section{Simple $\textit{A}(\alpha, \beta, \gamma)$-modules with nilpotent action of $z$ and $\beta \neq 0$.}
\subsection{Construction of simple $\textit{A}(\alpha,\beta,\gamma)$-modules} In this subsection we construct simple $\textit{A}(\alpha,\beta,\gamma)$-modules with nilpotent action of $z$ and with the assumption $\beta \in \mathbb{K}^*$.
\par\textbf{Simple modules of type $M_4(\underline{\mu})$:} For $\underline{\mu}:=(\mu_1,\mu_2) \in \mathbb{K}^2$, let us consider the $\mathbb{K}$-vector space $M_4(\underline{\mu})$ with basis $\{m_{k}:0 \leq k \leq l-1\}$. Define an $\textit{A}(\alpha, \beta, \gamma)$-module structure on the $\mathbb{K}$-space $M_4(\underline{\mu})$ as follows: 
\begin{align*}
m_kx&=\begin{cases}
    m_{k+1},& k\neq l-1\\
    \mu_1m_0,& k=l-1
\end{cases}\\
m_ky&=\begin{cases}\gamma\beta^{-1}q^{2k}m_{k+1}-\mu_2\beta^{-1}q^{2k}(q^2-1)m_k+\displaystyle\frac{1-q^{2k}}{1-q^2}\alpha m_{k-1},&k \neq 0,l-1\\
\gamma\beta^{-1}\mu_1q^{2k}m_{0}-\mu_2\beta^{-1}q^{2k}(q^2-1)m_k+\displaystyle\frac{1-q^{2k}}{1-q^2}\alpha m_{k-1},&k =l-1\\
    \gamma\beta^{-1}m_1-\mu_2\beta^{-1}(q^2-1)m_0,&k= 0.
\end{cases}\\
m_kz&=\begin{cases}
    \beta\displaystyle\frac{1-q^{-2k}}{1-q^{-2}}m_{k-1},& k\neq 0\\
    0,& k=0.
\end{cases}
\end{align*}
To ensure well-definedness, we must verify that the $\mathbb{K}$-endomorphisms of $M_4(\underline{\mu})$ defined by the aforementioned rules adhere to the prescribed relations. Through the actions described above, we undertake the following computations:\\
For $k \neq 0,l-1$, we have
\begin{align*}
    (m_k)xy&=m_{k+1}y\\&=\begin{cases}
      \gamma\beta^{-1}q^{2k+2}m_{k+2}-\mu_2\beta^{-1}q^{2k+2}(q^2-1)m_{k+1}+\displaystyle\frac{1-q^{2k+2}}{1-q^2}\alpha m_{k},&k \neq l-2\\
      \gamma\beta^{-1}\mu_1q^{2k+2}m_{0}-\mu_2\beta^{-1}q^{2k+2}(q^2-1)m_{k+1}+\displaystyle\frac{1-q^{2k+2}}{1-q^2}\alpha m_{k},&k =l-2.
    \end{cases} 
    \end{align*}
    Similarly \begin{align*}
       m_k\left(q^2yx+\alpha\right)&=q^2\left(\gamma\beta^{-1}q^{2k}m_{k+1}-\mu_2\beta^{-1}q^{2k}(q^2-1)m_k+\displaystyle\frac{1-q^{2k}}{1-q^2}\alpha m_{k-1}\right)x+\alpha m_k\\
         &=\begin{cases}
           \gamma\beta^{-1}q^{2k+2}m_{k+2}-\mu_2\beta^{-1}q^{2k+2}(q^2-1)m_{k+1}+\displaystyle\frac{1-q^{2k+2}}{1-q^2}\alpha m_{k},&k \neq l-2\\
           \gamma\beta^{-1}\mu_1q^{2k+2}m_{0}-\mu_2\beta^{-1}q^{2k+2}(q^2-1)m_{k+1}+\displaystyle\frac{1-q^{2k+2}}{1-q^2}\alpha m_{k},&k =l-2.
         \end{cases}
    \end{align*}
For $k=0$, we have 
\[(m_0)xy=m_{1}y=\begin{cases}
      \gamma\beta^{-1}q^{2}m_{2}-\mu_2\beta^{-1}q^{2}(q^2-1)m_{1}+\alpha m_{0},& l\neq 2\\
      \gamma\beta^{-1}\mu_1q^{2}m_{0}-\mu_2\beta^{-1}q^{2}(q^2-1)m_{1}+\alpha m_{0},&l=2. \end{cases} \]
      On the other hand
      \begin{align*}
      m_0\left(q^2yx+\alpha\right)
      &= \left(\gamma\beta^{-1}q^2m_1-\mu_2\beta^{-1}q^2(q^2-1)m_0\right)x+\alpha m_0\\
      &=\begin{cases}
        \gamma\beta^{-1}q^2m_2 -\mu_2\beta^{-1}q^2(q^2-1)m_1+\alpha m_0, & l \neq 2\\
        \gamma\beta^{-1}\mu_1q^{2}m_{0}-\mu_2\beta^{-1}q^{2}(q^2-1)m_{1}+\alpha m_{0},&l=2.
      \end{cases}\end{align*}
      For $k=l-1$
      \[m_{l-1}(xy)=\mu_1m_0y=\mu_1\gamma\beta^{-1}m_1-\mu_1\mu_2\beta^{-1}(q^2-1)m_0\]
      On other hand
      \begin{align*}
      &m_{l-1}\left(q^2yx+\alpha\right)\\&=q^2\left(\gamma\beta^{-1}\mu_1q^{2l-2}m_{0}-\mu_2\beta^{-1}q^{2l-2}(q^2-1)m_{l-1}+\displaystyle\frac{1-q^{2l-2}}{1-q^2}\alpha m_{l-2}\right)x+\alpha m_{l-1}\\
      &=\mu_1\gamma\beta^{-1}m_1-\mu_1\mu_2\beta^{-1}(q^2-1)m_0\end{align*}
      The others can also be checked to ensure $M_4(\underline{\mu})$ is indeed an $\textit{A}(\alpha,\beta,\gamma)$-module.
\begin{theo}\label{f3}
The module $M_4(\underline{\mu})$ is a simple $\textit{A}(\alpha,\beta,\gamma)$-module of dimension $l$.
\end{theo}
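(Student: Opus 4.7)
The dimension assertion is immediate from the definition of $M_4(\underline{\mu})$ as a vector space with the prescribed basis of size $l$, so the entire content of the theorem is simplicity. My plan is to imitate the eigenvector-shrinking technique used in Theorem \ref{f1}, but adapted to the fact that $z$ now acts nilpotently rather than semisimply: instead of exploiting distinct eigenvalues, I will use $z$ as a lowering operator to extract the specific basis vector $m_0$, and then use $x$ as a cyclic raising operator to recover the rest of the basis.

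The key observation is that the prescribed action of $z$ satisfies
\[
m_k \, z = \beta \,\frac{1-q^{-2k}}{1-q^{-2}}\, m_{k-1}\quad (k\neq 0), \qquad m_0 \, z = 0,
\]
and the scalar $\beta\,(1-q^{-2k})/(1-q^{-2})$ is nonzero for every $1\le k\le l-1$, because $\beta\in\mathbb{K}^{*}$ and $q^2$ is a primitive $l$-th root of unity (so $q^{-2k}\ne 1$ for $1\le k\le l-1$). Consequently $m_k \, z^k$ is a nonzero scalar multiple of $m_0$, while $m_k \, z^{k+1}=0$.

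Let $P$ be a nonzero submodule of $M_4(\underline{\mu})$ and pick $0\neq p\in P$, written as $p=\sum_{k=0}^{l-1}\lambda_k m_k$. Let $K:=\max\{k:\lambda_k\neq 0\}$. Applying $z^{K}$ kills every term with index $k<K$, and sends $\lambda_K m_K$ to a nonzero scalar multiple of $m_0$. Hence $m_0\in P$. Next, the prescribed action of $x$ gives
\[
m_0 \, x^k = m_k \quad \text{for } 0\le k\le l-1,
\]
so repeated application of $x$ to $m_0$ yields every basis vector. Therefore $P$ contains the entire basis, and so $P=M_4(\underline{\mu})$.

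I expect no real obstacle: both $z$ and $x$ act in a transparent way on the chosen basis (lowering by one with nonzero coefficients, and cycling with no obstruction until the final jump $m_{l-1}x=\mu_1 m_0$, which is irrelevant to the argument since we only need the forward shifts from $m_0$). The one place requiring care is checking that none of the coefficients $(1-q^{-2k})/(1-q^{-2})$ vanish for $1\le k\le l-1$, but this follows directly from the primitivity of $q^{2}$ as an $l$-th root of unity. The hypothesis $\beta\neq 0$ that headlines this section is precisely what guarantees that $z$ is not identically zero on the module, which is the engine driving the whole extraction argument.
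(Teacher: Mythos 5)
Your proof is correct. It differs from the paper's in the mechanism used to extract a basis vector from a nonzero submodule: the paper observes that each $m_k$ is an eigenvector of the operator $zx$ with eigenvalue $\beta\,\frac{1-q^{-2k}}{1-q^{-2}}$, checks these $l$ eigenvalues are pairwise distinct, and then runs the same length-reduction induction as in Theorem \ref{f1} (subtracting $\Lambda_u p$ from $p\,(zx)$ to shorten $p$), landing on \emph{some} basis vector $m_k$; you instead exploit the strictly lower-triangular (nilpotent) shape of the $z$-action, applying $z^{K}$ to annihilate all terms below the top index and land directly on $m_0$, then use $x$ as a forward shift $m_0x^k=m_k$. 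Both are valid. The paper's route buys uniformity — it recycles the Theorem \ref{f1} template verbatim across all the module families by just naming the right diagonal operator. Your route is more economical here and has the minor advantage that the final regeneration step never touches the wrap-around coefficient $\mu_1$ (which may be zero, since $\underline{\mu}\in\mathbb{K}^2$); starting from an arbitrary $m_k$, as the paper implicitly does, one must use $z$ as well as $x$ to recover the lower basis vectors. Your one point of care — that $(1-q^{-2k})/(1-q^{-2})\neq 0$ for $1\le k\le l-1$ by primitivity of $q^2$ — is exactly the nonvanishing the paper also relies on for distinctness of the $zx$-eigenvalues.
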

\begin{proof}
Note that the vector $m_k\in M_4(\underline{\mu})$ is an eigenvector of $zx$ associated with the eigenvalues $\displaystyle\frac{1-q^{-2k}}{1-q^{-2}}\beta$. With this fact, the proof is analogous to Theorem $\ref{f1}$.  
\end{proof}
\begin{theo}
    Let $\underline{\mu}=(\mu_1,\mu_2)$ and $\underline{\mu'}=(\mu'_1,\mu'_2)$ belong to ${\mathbb{K}}^2$. Then $M_4(\underline{\mu})\cong M_4(\underline{\mu}')$ if and only if $\mu_1={\mu'_1}$ and $\mu_2=\mu'_2$. 
\end{theo}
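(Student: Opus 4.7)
The backward direction is immediate, since if $\mu_1=\mu'_1$ and $\mu_2=\mu'_2$ then the defining formulas for the two modules coincide and the identity map is an $A(\alpha,\beta,\gamma)$-module isomorphism. So the work is in the forward direction: given an isomorphism $\phi\colon M_4(\underline{\mu})\to M_4(\underline{\mu}')$, I will extract the two equalities $\mu_1=\mu'_1$ and $\mu_2=\mu'_2$. The strategy has three parts: pin down $\phi$ on a canonical vector using the kernel of $z$, propagate to all basis vectors using the $x$-action, then compare two carefully chosen relations.

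For the first part, I will identify the kernel of the $z$-action as a $1$-dimensional subspace in each module. From the defining rule $m_kz=\beta\frac{1-q^{-2k}}{1-q^{-2}}m_{k-1}$ for $k\geq 1$ and $m_0z=0$, together with the standing assumption $\beta\neq 0$ and the fact that $q^2$ is a \emph{primitive} $l$-th root of unity (so $1-q^{-2k}\neq 0$ for $1\leq k\leq l-1$), I conclude $\ker z=\mathbb{K}m_0$ in $M_4(\underline{\mu})$, and likewise in $M_4(\underline{\mu}')$. Since $\phi$ intertwines the $z$-actions, it sends $\ker z$ to $\ker z$, forcing $\phi(m_0)=c\,m'_0$ for some $c\in\mathbb{K}^*$.

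For the second part, the defining $x$-action gives $m_k=m_0x^k$ for $0\leq k\leq l-1$, so $\phi(m_k)=\phi(m_0)x^k=c\,m'_k$ for every $k$. Applying $\phi$ to $m_{l-1}x=\mu_1m_0$ yields $c\mu'_1m'_0=c\mu_1m'_0$, hence $\mu_1=\mu'_1$. For $\mu_2$, I will expand both sides of $\phi(m_0y)=\phi(m_0)y$ using the defining $y$-action; the coefficient of $m'_0$ on the left is $-c\mu_2\beta^{-1}(q^2-1)$ and on the right is $-c\mu'_2\beta^{-1}(q^2-1)$, so $\mu_2=\mu'_2$.

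No step presents a real obstacle; the only point requiring care is the identification of $\ker z$ as exactly $\mathbb{K}m_0$, which is precisely where the hypotheses $\beta\neq 0$ and $\ord(q^2)=l$ enter, and which is what makes this classification simpler than the earlier cases in Section~5 where $z$ acts invertibly and one must instead track a common eigenvector of several central-like operators.
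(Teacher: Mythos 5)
Your argument is correct, and it is the natural adaptation of the paper's strategy from the $M_1$-isomorphism theorem in Section~5: there, the (invertible) $z$-action has distinct eigenvalues $\mu_2q^{2k}$ on the basis vectors, which forces $\phi(m_0)$ into a single $z$-eigenline; here, the nilpotent $z$-action has $\ker z=\mathbb{K}m_0$ as the substitute one-dimensional ``anchor,'' and the rest (propagating via $m_k=m_0x^k$, then reading off $\mu_1$ from $m_{l-1}x$ and $\mu_2$ from the $m'_0$-coefficient of $m_0y$) proceeds exactly as in the paper's template. The paper states this theorem without proof, but your argument supplies it in the intended way; all three nonvanishing facts you invoke --- $c\neq0$ by injectivity, $\beta\neq0$ by the section's standing hypothesis, and $1-q^{-2k}\neq0$ for $1\leq k\leq l-1$ by primitivity of $q^2$ --- are exactly what is needed.
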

\subsection{Classification of simple $\textit{A}(\alpha,\beta,\gamma)$-modules}
Suppose $N$ is a simple $\textit{A}(\alpha,\beta,\gamma)$-module such that $z$ acts as nilpotent operator on $N$. By our assumption, the $\mathbb{K}$-space $\ker(z):=\{v\in N:vz=0\}$ is nonzero subspace of $N$. The central elements $x^l$ and $\Omega$ act as scalars on $N$, by Schur's lemma. So we can choose a nonzero vector $v \in \ker(z)$ such that 
\[vx^l=\zeta_1v,\ v\Omega=\zeta_2v\] for some $\zeta_1,\zeta_2 \in \mathbb{K}$. Then using the expression (\ref{omga}) of $\Omega$ we compute 
\[v\Omega=\zeta_2v \implies v\left(zd+\frac{\gamma x}{q^2-1}-\frac{\beta y}{q^2-1}\right)=\zeta_2v\implies \gamma (vx)-\beta (vy)=(q^2-1)\zeta_2v.\] 
We now claim that the vectors $vx^k,\ 0\leq k\leq l-1$ in $N$ are nonzero. If $\zeta_1\neq 0$, then we are done. If not, suppose $s$ be the smallest integer with $1 \leq s \leq l$ such that $vx^{s-1} \neq 0$ and $vx^s=0$. Then after simplifying the equality $(vx^s)z=0$, we have
\begin{align*}
      0=(vx^s)z=v\Big( q^{-2s}zx^s+\displaystyle\frac{1-q^{-2s}}{1-q^{-2}}\beta x^{s-1}\Big)=\displaystyle\frac{1-q^{-2s}}{1-q^{-2}}\beta vx^{s-1}.
    \end{align*}
    This implies $s$ is the smallest index with $1\leq s\leq l$ such that $q^{2s}=1$. Thus we have $s=l$ and the claim follows. 
Therefore the set $\{vx^k:0 \leq k \leq l-1\}$ consists of nonzero vectors of $N$. Let us choose
\[\mu_1:=\zeta_1, \ \mu_2:=\zeta_2\] so that 
$\underline{\mu}=(\mu_1,\mu_2) \in \mathbb{K}^2$. Now define a $\mathbb{K}$-linear map \[\Phi_4:M_4(\underline{\mu})\rightarrow N\] by specifying the image of the basis vectors of $M_4(\underline{\mu})$ only
\[\Phi_4(m_k):=vx^k\]One can easily verify that $\Phi_4$ is a non zero $\textit{A}(\alpha,\beta,\gamma)$-module homomorphism. In this verification the following computations will be very useful:
\[\left(vx^k\right)y=\begin{cases}
\gamma\beta^{-1}q^{2k}\left(vx^{k+1}\right)-\zeta_2\beta^{-1}q^{2k}(q^2-1)\left(vx^k\right)+\displaystyle\frac{1-q^{2k}}{1-q^2}\alpha\left(vx^{k-1}\right),&k \neq 0 \\
\gamma\beta^{-1}\left(vx\right)-\zeta_2\beta^{-1}(q^2-1)v,&k= 0.
\end{cases}\]
By Schur's lemma, $\Phi_4$ is an isomorphism because $M_4(\underline{\mu})$ and $N$ are both simple $\textit{A}(\alpha,\beta,\gamma)$-modules.
\par Finally the above discussions lead us to the final result of this section which provides an opportunity for the classification of simple $\textit{A}(\alpha,\beta,\gamma)$-modules in terms of scalar parameters.
\begin{theo}\label{main2}
Let $q^2$ be a primitive $l$-th root of unity. Then each simple $\textit{A}(\alpha,\beta,\gamma)$-module with nilpotent action of $z$ and $\beta \neq 0$ is isomorphic to the simple $\textit{A}(\alpha,\beta,\gamma)$-module $M_4(\underline{\mu})$ for some $\underline{\mu}:=(\mu_1,\mu_2)\in  \mathbb{K}^2$.
\end{theo}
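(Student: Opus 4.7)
The plan is to mirror the cyclic-vector-plus-Schur's-lemma strategy used for Theorem \ref{main1}, but starting from the kernel of $z$ instead of an eigenvector of invertible central operators. Let $N$ be a simple $\textit{A}(\alpha,\beta,\gamma)$-module with $z$ acting nilpotently. Since $z^l$ is central and nilpotent on $N$, Schur's lemma forces $z^l$ to act as $0$, so $\ker(z)\cap N$ is a nonzero subspace. The algebra is PI and the centre is generated by $x^l,y^l,z^l,\Omega$ (Theorem \ref{cen}); all central elements act as scalars on $N$. In particular I can choose a nonzero $v\in \ker(z)$ that is a simultaneous eigenvector for $x^l$ and $\Omega$, say $vx^l=\zeta_1 v$ and $v\Omega=\zeta_2 v$. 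Setting $\mu_1:=\zeta_1$, $\mu_2:=\zeta_2$, the candidate isomorphism will be
\[
\Phi_4:M_4(\underline{\mu})\longrightarrow N,\qquad \Phi_4(m_k):=vx^k,\ \ 0\le k\le l-1.
\]

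To extract the action of $y$ on the $vx^k$, I would use the decomposition \eqref{omga} of $\Omega$: applying $v\Omega=\zeta_2 v$ together with $vz=0$ yields
\[
\frac{\gamma}{q^2-1}(vx)-\frac{\beta}{q^2-1}(vy)=\zeta_2 v,
\]
and since $\beta\neq 0$ this gives $vy=\gamma\beta^{-1}(vx)-\zeta_2\beta^{-1}(q^2-1)v$. The formula for $(vx^k)y$ in the Module $M_4(\underline{\mu})$ then follows by a straightforward induction using the defining relation $xy=q^2yx+\alpha$ (or equivalently identity (1) of Theorem 2.1), which expresses $x^ky$ as a combination of $yx^k$ and $x^{k-1}$. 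The $z$-action comes directly from identity (3) of Theorem 2.1: since $vz=0$,
\[
(vx^k)z \;=\; v\!\left(q^{-2k}zx^k+\frac{1-q^{-2k}}{1-q^{-2}}\beta x^{k-1}\right) \;=\; \frac{1-q^{-2k}}{1-q^{-2}}\beta\,(vx^{k-1}),
\]
which matches the $z$-action prescribed in the definition of $M_4(\underline{\mu})$, and the case $k=l-1$ closing back to $\mu_1 m_0$ is forced by the eigenvalue $vx^l=\mu_1 v$.

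The main obstacle, which I would address before writing down $\Phi_4$, is showing that the vectors $v,vx,\dots,vx^{l-1}$ are all nonzero. If $\zeta_1\neq 0$, this is immediate from $vx^l=\zeta_1 v\neq 0$. Otherwise, let $s$ be the smallest positive integer with $vx^s=0$; applying the $z$-identity above with $k=s$ gives $0=(vx^s)z=\tfrac{1-q^{-2s}}{1-q^{-2}}\beta\,(vx^{s-1})$, forcing $q^{-2s}=1$ because $\beta\neq 0$ and $vx^{s-1}\neq 0$, hence $l\mid s$ and so $s\ge l$. This is precisely the place where the hypothesis $\beta\neq 0$ is used essentially. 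Once the vectors are shown nonzero, $\Phi_4$ is nonzero, and it is an $\textit{A}(\alpha,\beta,\gamma)$-module homomorphism by the computations above; since both $M_4(\underline{\mu})$ and $N$ are simple, Schur's lemma yields $\Phi_4$ is an isomorphism, completing the classification.
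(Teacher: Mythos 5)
Your proposal is correct and follows essentially the same route as the paper: choose $v\in\ker(z)$ as a common eigenvector of $x^l$ and $\Omega$, use the expression for $\Omega$ together with $vz=0$ and $\beta\neq 0$ to solve for $vy$, show $v,vx,\dots,vx^{l-1}$ are nonzero via the $x$--$z$ commutation identity, and conclude via Schur's lemma that $\Phi_4(m_k)=vx^k$ is an isomorphism. The reasoning and the point where $\beta\neq 0$ is used match the paper's argument step by step.
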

 \section{Simple $\textit{A}(\alpha, \beta, \gamma)$-modules with nilpotent action of $z$ and $\beta=0,\ \gamma\neq 0$} 
 \subsection{Construction of simple $\textit{A}(\alpha,0,\gamma)$-modules} 
 In this subsection, we will construct simple $\textit{A}(\alpha,\beta,\gamma)$-modules with nilpotent action of $z$ and with the assumptions $\beta=0$ and $\gamma\neq 0$.
 \par\textbf{Simple modules of type $M_5(\underline{\mu})$:} For $\underline{\mu}:=(\mu_1,\mu_2) \in \mathbb{K}^2$, let us consider the $\mathbb{K}$-vector space $M_5(\underline{\mu})$ with basis $\{m_{k}:0 \leq k \leq l-1\}$. Define an $\textit{A}(\alpha, 0, \gamma)$-module structure on the $\mathbb{K}$-space $M_5(\underline{\mu})$ as follows: 
\begin{align*}
m_kx&=\begin{cases}
    \mu_2\gamma^{-1}q^{-2k}(q^2-1)m_k-q^{-2}\alpha\displaystyle\frac{1-q^{-2k}}{1-q^{-2}}m_{k-1},& k\neq 0\\
    \mu_2\gamma^{-1}(q^2-1)m_k,& k=0
\end{cases}\\
m_ky&=\begin{cases}m_{k+1},&k \neq l-1\\
    \mu_1m_0,&k= l-1.
\end{cases}\\
m_kz&=\begin{cases}
    \gamma\displaystyle\frac{1-q^{2k}}{1-q^{2}}m_{k-1},& k\neq 0\\
    0,& k=0.
\end{cases}
\end{align*}
\begin{theo}\label{f4}
The module $M_5(\underline{\mu})$ is a simple $\textit{A}(\alpha,0,\gamma)$-module of dimension $l$.
\end{theo}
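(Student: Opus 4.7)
The proof naturally splits into two pieces: first verify that the prescribed actions define an $\textit{A}(\alpha,0,\gamma)$-module structure on the $l$-dimensional space $M_5(\underline{\mu})$, and then establish simplicity by the same eigenvector strategy used in Theorems \ref{f1}, \ref{f2}, \ref{f3.1}, and \ref{f3}. The well-definedness reduces to checking the three defining relations $xy=q^2yx+\alpha$, $xz=q^{-2}zx$ (since $\beta=0$), and $yz=q^2zy+\gamma$ on each basis vector $m_k$; the verification splits into the boundary cases $k=0$ and $k=l-1$ and the generic interior range $1\le k\le l-2$, exactly as was carried out in detail for $M_4(\underline{\mu})$. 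This step is tedious but mechanical, so I would merely indicate the main cases and leave the rest to the reader by analogy.

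For simplicity, the crucial observation is to identify an operator under which the basis vectors are eigenvectors with pairwise distinct eigenvalues. Here the correct choice is $zy$: a direct computation using the defining actions gives
\begin{equation*}
m_k(zy)=\gamma\,\frac{1-q^{2k}}{1-q^{2}}\,m_k \quad (k\neq 0), \qquad m_0(zy)=0,
\end{equation*}
so the eigenvalues $\Lambda_k:=\gamma(1-q^{2k})/(1-q^2)$ for $0\le k\le l-1$ are $l$ distinct scalars, because $\gamma\neq 0$ and because $q^2$ being a primitive $l$-th root of unity forces $q^{2k}\neq q^{2k'}$ for $0\le k\neq k'\le l-1$.

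Now let $P\subseteq M_5(\underline{\mu})$ be a nonzero submodule and pick $0\neq p=\sum\lambda_k m_k\in P$. Following the argument of Theorem \ref{f1}, iteratively replacing $p$ by $p(zy)-\Lambda_u p$ for an index $u$ with $\lambda_u\neq 0$ strictly reduces the number of nonzero coefficients while remaining inside $P$; after finitely many steps one produces a single basis vector $m_k$ inside $P$. To spread from $m_k$ to the whole basis, use the two module actions in tandem: $m_j y=m_{j+1}$ for $j<l-1$ propagates forward, while $m_j z=\gamma(1-q^{2j})/(1-q^2)\,m_{j-1}$ is a \emph{nonzero} multiple of $m_{j-1}$ for every $1\le j\le l-1$ (again because $\gamma\neq 0$ and $q^{2j}\neq 1$ in that range). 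Thus $y$ carries $m_k$ up to $m_{l-1}$ and $z$ carries $m_k$ down to $m_0$, placing every basis vector into $P$. Hence $P=M_5(\underline{\mu})$, completing the proof.

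I do not expect a genuine obstacle. The only delicate point is ensuring that the backward propagation via $z$ really works, which is precisely where the hypothesis $\gamma\neq 0$ of this section enters; without it the argument for $M_4(\underline{\mu})$ used $zx$ and the hypothesis $\beta\neq 0$ instead, and in the complementary case $\beta=0$, $\gamma=0$ of the next section a different operator will be needed. The well-definedness check is the longest piece of writing but contains no conceptual difficulty.
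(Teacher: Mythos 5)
Your proposal is correct and matches the paper's approach exactly: the paper also identifies $zy$ as the operator for which the $m_k$ are eigenvectors with pairwise distinct eigenvalues $\gamma(1-q^{2k})/(1-q^2)$, and then invokes the argument of Theorem~\ref{f1}. Your write-up just makes the ``analogous to Theorem~\ref{f1}'' step explicit, including the correct observation that $\gamma\neq 0$ is what makes the backward $z$-propagation nonzero.
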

\begin{proof}
Note that the vector $m_k\in M_5(\underline{\mu})$ is an eigenvector of $zy$ associated with the eigenvalues $\displaystyle\frac{1-q^{2k}}{1-q^{2}}\gamma$. With this fact, the proof is analogous to Theorem $\ref{f1}$.
\end{proof}
\begin{theo}
    Let $\underline{\mu}=(\mu_1,\mu_2)$ and $\underline{\mu'}=(\mu'_1,\mu'_2)$ belong to ${\mathbb{K}}^2$. Then $M_5(\underline{\mu})\cong M_5(\underline{\mu}')$ if and only if $\mu_1={\mu'_1}$ and $\mu_2=\mu'_2$. 
\end{theo}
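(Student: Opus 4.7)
The plan is to extract two scalar invariants of $M_5(\underline{\mu})$ that recover $\mu_1$ and $\mu_2$ exactly, by pinning down a distinguished vector in the module and testing the action of $x$ and of $y^l$ on it. The converse is trivial because the identification of parameters makes the two modules literally equal as $\textit{A}(\alpha,0,\gamma)$-modules.

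First I would show that $\ker(z):=\{v\in M_5(\underline{\mu}): vz=0\}$ is one-dimensional, spanned by $m_0$. From the definition $m_k z=\gamma\frac{1-q^{2k}}{1-q^2}m_{k-1}$ for $1\le k\le l-1$, and since $q^2$ is a primitive $l$-th root of unity, the scalar $\gamma\frac{1-q^{2k}}{1-q^2}$ is nonzero for each $k=1,\ldots,l-1$. Hence $\bigl(\sum_{k=0}^{l-1}c_k m_k\bigr)z=0$ forces $c_1=\cdots=c_{l-1}=0$, giving $\ker(z)=\mathbb{K}m_0$. Any module isomorphism $\phi\colon M_5(\underline{\mu})\to M_5(\underline{\mu}')$ must carry $\ker(z)$ to the analogous kernel in $M_5(\underline{\mu}')$, so $\phi(m_0)=\lambda m_0'$ for some $\lambda\in\mathbb{K}^*$, where $m_0'$ denotes the distinguished basis vector of $M_5(\underline{\mu}')$.

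Next I would compute the action of $x$ on $m_0$ and $m_0'$ and compare via $\phi$. Since $m_0 x=\mu_2\gamma^{-1}(q^2-1)m_0$ and $m_0' x=\mu_2'\gamma^{-1}(q^2-1)m_0'$, the equality $\phi(m_0 x)=\phi(m_0)x$ forces $\mu_2=\mu_2'$. Then, using $m_0 y^k=m_k$ for $0\le k\le l-1$ together with $m_{l-1}y=\mu_1 m_0$, we obtain $m_0 y^l=\mu_1 m_0$, and similarly $m_0' y^l=\mu_1' m_0'$; applying $\phi$ to the equality $m_0 y^l=\mu_1 m_0$ yields $\mu_1=\mu_1'$.

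For the converse, when $\mu_1=\mu_1'$ and $\mu_2=\mu_2'$, the defining formulas for the action of $x$, $y$, $z$ on the basis $\{m_k\}$ agree term by term with those for $\{m_k'\}$, so the $\mathbb{K}$-linear map sending $m_k\mapsto m_k'$ is an $\textit{A}(\alpha,0,\gamma)$-module isomorphism. The only non-routine step is the initial kernel computation; once $\ker(z)=\mathbb{K}m_0$ is in hand, the remainder of the argument is a direct comparison of scalars.
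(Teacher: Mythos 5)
Your proof is correct and follows the same strategy the paper uses for the analogous isomorphism criterion for $M_1(\underline{\mu})$ (which is the only one the authors write out): locate a distinguished line in the module that any isomorphism must preserve, then read off the parameters by applying generators. The paper states this theorem without proof, so your argument fills the gap in the natural way.

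Two small remarks. First, the paper's technique for $M_1$ uses the fact that $z$ acts diagonalizably with distinct eigenvalues $\mu_2 q^{2k}$ to force $\phi(m_0)$ to be a scalar multiple of a single basis vector; here $z$ is nilpotent, so your replacement step --- computing $\ker(z)=\mathbb{K}m_0$ directly and using $\phi(\ker z)\subseteq\ker z$ --- is exactly the right adaptation, and your verification that the coefficients $\gamma\frac{1-q^{2k}}{1-q^2}$ are nonzero for $1\le k\le l-1$ (since $\gamma\neq 0$ and $q^2$ is a primitive $l$-th root of unity) is what makes it go through. Second, you could alternatively phrase the first step as the paper does for $M_1$: since $m_k=m_0 y^k$, the map $\phi$ is determined by $\phi(m_0)$, and then $\phi(m_0)z=\phi(m_0 z)=0$ forces $\phi(m_0)\in\ker(z)$; this is the same content. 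The remainder --- extracting $\mu_2$ from the $x$-action on $m_0$ and $\mu_1$ from the $y^l$-action --- is clean and complete, and the converse is indeed immediate since identical parameters give identical structure constants.
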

\subsection{Classification of simple $\textit{A}(\alpha,0,\gamma)$-modules} Assume that $\gamma\neq 0$. Suppose $N$ is a simple $\textit{A}(\alpha,0,\gamma)$-module with nilpotent action of $z$. Note that the central elements $y^l$ and $\Omega$ act as scalars on $N$, by Schur's lemma. Therefore we can choose a nonzero vector $v \in \ker (z)$ such that 
\[vy^l=\zeta_1v,\ v\Omega=\zeta_2v\] for some $\zeta_1,\zeta_2 \in \mathbb{K}$. Then using the expression (\ref{omga}) of $\Omega$ along with the assumptions $\beta=0$ and $\gamma\neq 0$ we have
\[v\Omega=\zeta_2v \implies v\left(zd+\frac{\gamma x}{q^2-1}\right)=\zeta_2v\implies  vx=\zeta_2\gamma^{-1}(q^2-1)v.\] 
We now claim that the vectors $vy^k,\ 0\leq k\leq l-1$ in $N$ are nonzero. If $\zeta_1 \neq 0$ then we are done. If not suppose $s$ be the smallest integer with $1 \leq s \leq l$ such that $vy^{s-1} \neq 0$ and $vy^s=0$. Then after simplifying the equality $(vy^s)z=0$ we have
    \[0=(vy^s)z=v\Big( q^{2s}zy^s+\displaystyle\frac{1-q^{2s}}{1-q^{2}}\gamma y^{s-1}\Big)=\displaystyle\frac{1-q^{2s}}{1-q^{2}}\gamma vy^{s-1}.\]
    This implies $s$ is the smallest index with $1\leq s\leq l$ such that $q^{2s}=1$. Thus we have $s=l$ and the claim follows. Therefore the vectors $vy^k,\ 0 \leq k \leq l-1$ of $N$ are nonzero. Let us choose
\[\mu_1:=\zeta_1, \ \mu_2:=\zeta_2\] so that 
$\underline{\mu}=(\mu_1,\mu_2) \in \mathbb{K}^2$. Now define a $\mathbb{K}$-linear map \[\Phi_5:M_5(\underline{\mu})\rightarrow N\] by specifying the image of the basis vectors of $M_4(\underline{\mu})$ only
\[\Phi_5(m_k):=vy^k\]One can easily verify that $\Phi_5$ is a non zero $\textit{A}(\alpha,0,\gamma)$-module homomorphism. In this verification the following computations will be very useful:
\[\left(vy^k\right)x=\begin{cases}
    \zeta_2\gamma^{-1}q^{-2k}(q^2-1)\left(vy^k\right)-q^{-2}\alpha\displaystyle\frac{1-q^{-2k}}{1-q^{-2}}\left(vy^{k-1}\right),& k\neq 0\\
    \zeta_2\gamma^{-1}(q^2-1)v,& k=0
\end{cases}\]
By Schur's lemma, $\Phi_5$ is an isomorphism because $M_5(\underline{\mu})$ and $N$ are both simple $\textit{A}(\alpha,0,\gamma)$-modules.
\begin{theo}\label{main3}
Let $q^2$ be a primitive $l$-th root of unity. Then each simple $\textit{A}(\alpha,0,\gamma)$-module with nilpotent action of $z$ and $\gamma \neq 0$ is isomorphic to the simple $\textit{A}(\alpha,0,\gamma)$-module $M_5(\underline{\mu})$ for some $\underline{\mu}:=(\mu_1,\mu_2)\in \mathbb{K}^2$.
\end{theo}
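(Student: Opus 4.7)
The plan is to carry out a universal-module argument: given an arbitrary simple $\textit{A}(\alpha,0,\gamma)$-module $N$ with $z$ acting nilpotently, I would extract a distinguished cyclic vector $v\in N$, use it to define scalars $\mu_1,\mu_2$, and then build a nonzero $\textit{A}(\alpha,0,\gamma)$-module homomorphism $\Phi_5 : M_5(\underline{\mu})\to N$ sending $m_k\mapsto vy^k$. Since both source and target are simple finite-dimensional modules, Schur's lemma will upgrade any such nonzero map to an isomorphism. The main work is therefore concentrated in producing $v$ with the right properties and in verifying that the prescribed action on $M_5(\underline{\mu})$ really matches what $v$ sees inside $N$.

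For the cyclic vector, I would proceed as follows. Nilpotence of $z$ forces $\ker(z)\neq 0$, so there is room to choose $v$. The central elements $y^l$ (which is central by the corollary following Theorem~2.1) and $\Omega$ act as scalars on $N$ by Schur's lemma; call them $\zeta_1$ and $\zeta_2$. A simultaneous eigenvector $v\in\ker(z)$ for these two operators exists because both commute with $z$ and hence preserve $\ker(z)$. The crucial reduction is to plug $v$ into the expression $\Omega=zd+\gamma x/(q^2-1)-\beta y/(q^2-1)$ from (\ref{omga}); with $\beta=0$ and $vz=0$, this collapses at once to $vx=\zeta_2\gamma^{-1}(q^2-1)v$. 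So $v$ is in fact a joint eigenvector for $x$ and $z$, and the entire cyclic orbit is controlled by $y$ alone.

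Next I would establish that the vectors $vy^k$ for $0\le k\le l-1$ are all nonzero, which is what lets $M_5(\underline{\mu})$ match $N$ in dimension. If $\zeta_1\neq 0$ this is automatic since $vy^l=\zeta_1 v\neq 0$. Otherwise, let $s$ be least with $vy^{s-1}\neq 0$ but $vy^s=0$; applying $z$ on the right and invoking identity (5) of Theorem~2.1 gives
\[
0 = v\bigl(q^{2s}zy^s + \tfrac{1-q^{2s}}{1-q^2}\gamma\, y^{s-1}\bigr) = \tfrac{1-q^{2s}}{1-q^2}\gamma\, vy^{s-1}.
\]
Since $\gamma\neq 0$ and $vy^{s-1}\neq 0$, this forces $q^{2s}=1$, hence $s=l$. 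This is exactly the $\beta=0$ analogue of the argument used for the $M_4$ case.

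Finally, setting $\mu_1:=\zeta_1$ and $\mu_2:=\zeta_2$, I would define $\Phi_5(m_k):=vy^k$ and verify the three module relations. The $y$-action matches immediately from the definition of $M_5$ and the relation $vy^l=\zeta_1 v$; the $z$-action is a direct consequence of identity (5) applied to $v$ (with $vz=0$); the $x$-action is the one requiring care: using identity (2) to commute $x$ past $y^k$ and then substituting $vx=\zeta_2\gamma^{-1}(q^2-1)v$ should reproduce the formula for $m_kx$ exactly, with a boundary case at $k=0$. Nonvanishing of $\Phi_5$ is immediate from $\Phi_5(m_0)=v\neq 0$, and Schur's lemma finishes the argument. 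I expect the scalar bookkeeping in the $x$-action verification to be the only mildly delicate step; everything else is a transparent application of centrality plus Schur.
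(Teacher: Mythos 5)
Your proposal follows the paper's own proof essentially step for step: choosing $v\in\ker(z)$ on which the central elements $y^l$ and $\Omega$ act by scalars, collapsing $\Omega$ (with $\beta=0$, $vz=0$) to get $vx=\zeta_2\gamma^{-1}(q^2-1)v$, proving $vy^k\neq 0$ for $0\le k\le l-1$ via the identity $(vy^s)z=\frac{1-q^{2s}}{1-q^2}\gamma\,vy^{s-1}$, and concluding with $\Phi_5(m_k)=vy^k$ and Schur's lemma. The argument is correct and matches the paper's route.
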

\section{Simple $\textit{A}(\alpha, \beta, \gamma)$-modules with nilpotent action of $z$ and $\beta=\gamma=0$}
Firstly, the algebra $A(0,0,0)$ becomes a quantum affine space with three generators. The simple modules over such a quantum affine space have been classified in \cite{smsb}. So in the remainder of this section, we will assume that $\alpha\neq 0$.
 \subsection{Construction of simple $\textit{A}(\alpha,0,0)$-modules} Let us first start by constructing simple $\textit{A}(\alpha,0,0)$-modules with the assumption $\alpha \neq 0$.
 \par \textbf{Simple modules of type $M_6(\underline{\mu})$:} $\underline{\mu}:=(\mu_1,\mu_2)\in \mathbb{({K}^*)}^2$, let us consider the $\mathbb{K}$-vector space $M_6(\underline{\mu})$ with basis $\{m_{k}:0 \leq k \leq l-1\}$. Define an $\textit{A}(\alpha, 0,0)$-module structure on the $\mathbb{K}$-space $M_6(\underline{\mu})$ as follows: 
\begin{align*}
m_kx&=\begin{cases}
  \mu_1m_{k+1},&k\neq l-1\\
  \mu_1m_0,&k=l-1
\end{cases}\\
m_ky&=\begin{cases}
    \mu_1^{-1}\displaystyle\frac{q^{2k}\mu_2-\alpha}{q^2-1}m_{k-1},&k\neq 0\\
    \mu_1^{-1}\displaystyle\frac{\mu_2-\alpha}{q^2-1}m_{l-1},&k=0
\end{cases}\\
m_kz&=0.
\end{align*}
\begin{theo}\label{f5}
The module $M_6(\underline{\mu})$ is a simple $\textit{A}(\alpha,0,0)$-module of dimension $l$.
\end{theo}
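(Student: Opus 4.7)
The strategy mirrors the simplicity proofs of $M_1$ through $M_5$ (Theorems \ref{f1}--\ref{f4}): identify an element of $\textit{A}(\alpha,0,0)$ that is diagonalizable on the prescribed basis $\{m_k\}$ with pairwise distinct eigenvalues, then combine the standard eigenvector-chipping argument with a generator whose action is transitive (up to nonzero scalars) on the basis.

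First I would verify well-definedness of $M_6(\underline{\mu})$ as an $\textit{A}(\alpha,0,0)$-module. Since $\beta=\gamma=0$, the two relations $xz=q^{-2}zx$ and $yz=q^2zy$ are automatic because $z$ acts as the zero operator, leaving only $xy-q^2yx=\alpha$ to verify on each $m_k$. The wrap-around cases $k=0$ and $k=l-1$ require separate arithmetic but are handled using $q^{2l}=1$.

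Next, a direct computation using the defining actions (and $q^{2l}=1$ to unify the wrap-around at $k=l-1$) yields
\[
m_k \cdot (xy) \;=\; \frac{q^{2(k+1)}\mu_2-\alpha}{q^2-1}\, m_k, \qquad 0 \le k \le l-1,
\]
so each $m_k$ is an eigenvector of $xy$. Because $\mu_2 \in \mathbb{K}^*$ and $q^2$ is a primitive $l$-th root of unity, the scalars $q^{2(k+1)}\mu_2$ are pairwise distinct for $k=0,\ldots,l-1$, so $xy$ has $l$ distinct eigenvalues on the basis.

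Finally, I would argue simplicity exactly as in the proof of Theorem \ref{f1}. Let $P$ be a nonzero submodule and $p \in P$ any nonzero element written as a $\mathbb{K}$-linear combination of the $m_k$. If two indices $u \neq v$ appear with nonzero coefficient in $p$, then $p(xy)-\Lambda_u p \in P$ is a nonzero element involving strictly fewer basis vectors, where $\Lambda_u$ denotes the eigenvalue of $xy$ at $m_u$. Induction on the number of contributing basis vectors shows $P$ contains some $m_{k_0}$, and since the $x$-action cyclically permutes the basis (scaled by the nonzero $\mu_1$, with $m_{l-1}x=\mu_1m_0$ closing the cycle), every $m_k$ lies in $P$. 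Hence $P=M_6(\underline{\mu})$, and the dimension is $l$ by construction. No step presents a genuine obstacle; the argument parallels the preceding cases, with $xy$ taking the diagonalizing role played by $z$ for $M_1,M_2,M_3$ and by $zx$, $zy$ for $M_4,M_5$.
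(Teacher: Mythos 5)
Your proposal is correct and follows essentially the same argument as the paper: the paper's proof observes that each $m_k$ is an eigenvector of $yx$ with eigenvalue $\frac{q^{2k}\mu_2-\alpha}{q^2-1}$ and then invokes the eigenvector-chipping argument of Theorem \ref{f1}, whereas you diagonalize $xy$ instead, but since $m_k(xy)=q^2\,m_k(yx)+\alpha m_k$ the two operators share eigenvectors and have eigenvalues related by an affine map, so the distinctness and the chipping argument proceed identically.
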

\begin{proof}
Note that the vector $m_k\in M_6(\underline{\mu})$ is an eigenvector of $yx$ associated with the eigenvalues $\displaystyle\frac{q^{2k}\mu_2-\alpha}{q^2-1}$. With this fact, the proof is analogous to Theorem $\ref{f1}$.  
\end{proof}
\begin{theo}
    Let $\underline{\mu}=(\mu_1,\mu_2)$ and $\underline{\mu'}=(\mu'_1,\mu'_2)$ belong to ${(\mathbb{K}^*)}^2$. Then $M_6(\underline{\mu})\cong M_6(\underline{\mu}')$ if and only if $\mu_1^l={\mu'_1}^l$ and there exists some $i$ with $0 \leq i \leq l-1$ such that $\mu_2=q^{2i}\mu'_2$. 
\end{theo}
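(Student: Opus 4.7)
The plan is to prove both implications separately, with the forward direction exploiting the central element $x^l$ and the operator $yx$, and the backward direction given by an explicit shift-and-rescale map.

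For the ``only if'' direction, I would first note that the central element $x^l$ acts on $M_6(\underline{\mu})$ as the scalar $\mu_1^l$ (applying $m_0 \mapsto m_0 x^l$ step-by-step gives $\mu_1^l m_0$, and all other basis vectors follow cyclically). By Schur's lemma, any isomorphism $\phi \colon M_6(\underline{\mu}) \to M_6(\underline{\mu}')$ forces $\mu_1^l = (\mu_1')^l$. Next, as recorded in the proof of Theorem~\ref{f5}, each $m_k$ is a $yx$-eigenvector with eigenvalue $\frac{q^{2k}\mu_2 - \alpha}{q^2 - 1}$; because $\mu_2 \in \mathbb{K}^*$ and $q^2$ is a primitive $l$-th root of unity, these $l$ eigenvalues are pairwise distinct. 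Writing $\phi(m_0) = \sum_{p \in \mathcal{I}} \lambda_p m_p'$ with $\lambda_p \neq 0$ and comparing $\phi(m_0 \cdot yx)$ with $\phi(m_0) \cdot yx$ forces $\mu_2 = q^{2p}\mu_2'$ for every $p \in \mathcal{I}$, so $|\mathcal{I}| = 1$ and there exists a unique $i$ with $\mu_2 = q^{2i}\mu_2'$.

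For the ``if'' direction, assuming $\mu_1^l = (\mu_1')^l$ and $\mu_2 = q^{2i}\mu_2'$ for some $0 \leq i \leq l-1$, I would define $\psi \colon M_6(\underline{\mu}) \to M_6(\underline{\mu}')$ by
\[
  \psi(m_k) := \left(\frac{\mu_1'}{\mu_1}\right)^{\!k} m_{k \oplus i}, \qquad 0 \le k \le l-1,
\]
where $\oplus$ denotes addition modulo $l$. Compatibility with $z$ is immediate since $z$ acts as zero on both modules. Compatibility with $x$ reduces to the identity $(\mu_1'/\mu_1)^{l-1}\mu_1' = \mu_1$, which is equivalent to $\mu_1^l = (\mu_1')^l$ and handles the cyclic wrap-around at $k = l-1$. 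Compatibility with $y$ uses the substitution $q^{2(k \oplus i)}\mu_2' = q^{2k}\mu_2$ (valid since $q^{2l}=1$) to match the eigenvalue coefficients appearing in the $y$-action, together with the scaling identity $\mu_1^{-1}(\mu_1'/\mu_1)^{k-1} = (\mu_1'/\mu_1)^k (\mu_1')^{-1}$.

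The main obstacle is ensuring that the wrap-around cases in the $x$- and $y$-actions (namely $k = l-1$ for $x$ and $k = 0$ for $y$) are consistently handled: this is precisely where the scalar $\mu_1^l = (\mu_1')^l$ is used, and where the identification $\mu_2 = q^{2i}\mu_2'$ guarantees that the coefficient $\mu_1'^{-1}\frac{\mu_2' - \alpha}{q^2 - 1}$ at the boundary of $M_6(\underline{\mu}')$ matches the image of $\mu_1^{-1}\frac{q^{2(l-i)}\mu_2 - \alpha}{q^2-1}$ from $M_6(\underline{\mu})$ after the cyclic shift. Once these boundary checks are verified, simplicity of both modules (Theorem~\ref{f5}) combined with non-vanishing of $\psi$ yields bijectivity by Schur's lemma, completing the proof.
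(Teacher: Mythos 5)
Your proof is correct and follows the same template the paper uses for the analogous isomorphism criterion for $M_1(\underline{\mu})$ in Section~5 (the paper leaves the present theorem unproved). Since $z$ acts as zero on $M_6(\underline{\mu})$ and cannot separate basis vectors, your substitution of the operator $yx$ --- whose pairwise-distinct eigenvalues $\frac{q^{2k}\mu_2-\alpha}{q^2-1}$ are exactly those recorded in the proof of Theorem~\ref{f5} --- is the right replacement, after which the singleton index-set argument, the use of the central scalar $\mu_1^l$, and the explicit shift-and-rescale map $\psi(m_k)=(\mu_1'/\mu_1)^k m_{k\oplus i}$ with the boundary check at $k=l-1$ proceed exactly as in the paper's $M_1$ proof.
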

\par \textbf{Simple modules of type $M_7({\mu})$:} For $\mu \in \mathbb{K}$, let us consider the $\mathbb{K}$-vector space $M_7({\mu})$ with basis $\{m_{k}:0 \leq k \leq l-1\}$. Define an $\textit{A}(\alpha, 0,0)$-module structure on the $\mathbb{K}$-space $M_7({\mu})$ as follows: 
\begin{align*}
m_kx&=\begin{cases}
    -q^{-2}\alpha\displaystyle\frac{1-q^{-2k}}{1-q^{-2}}m_{k-1},& k\neq 0.\\
    0,& k=0.
\end{cases}\\
m_ky&=\begin{cases}
m_{k+1},& k\neq l-1\\
\mu m_0, & k=l-1
\end{cases}\\
m_kz&=0.
\end{align*}
\begin{theo}\label{f6}
The module $M_7({\mu})$ is a simple $\textit{A}(\alpha,0,0)$-module of dimension $l$.
\end{theo}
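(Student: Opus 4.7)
\emph{The plan is to mimic the template of Theorem \ref{f1}, but since $z$ annihilates $M_7(\mu)$ it cannot be used to separate basis vectors; instead I would use the operator $yx$.} First I would pin down the eigenstructure of $yx$ on the distinguished basis $\{m_k\}$ and verify that the eigenvalues are pairwise distinct. Then the standard ``shrink the support'' argument isolates a single basis vector inside any nonzero submodule, after which the actions of $x$ and $y$ are used to fill out the module.

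For the eigenvalue computation I would distinguish two ranges. For $0\le k\le l-2$, we have $m_ky=m_{k+1}$ and $m_{k+1}x=-q^{-2}\alpha\tfrac{1-q^{-2(k+1)}}{1-q^{-2}}m_k$, so
\[
m_k\cdot yx=-q^{-2}\alpha\,\tfrac{1-q^{-2(k+1)}}{1-q^{-2}}\,m_k.
\]
For $k=l-1$, $m_{l-1}y=\mu m_0$ and $m_0x=0$, so $m_{l-1}\cdot yx=0$, which agrees with the same formula since $q^{-2l}=1$. Writing $c_k:=-q^{-2}\alpha\,\tfrac{1-q^{-2(k+1)}}{1-q^{-2}}$, the equality $c_k=c_j$ forces $q^{-2(k-j)}=1$, hence $l\mid (k-j)$; so the $c_k$ are pairwise distinct on $\{0,1,\dots,l-1\}$. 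Note $\alpha\neq 0$ is what makes these scalars nontrivially distinguishable.

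Let $P$ be a nonzero submodule and write any nonzero $p\in P$ as $p=\sum_{k\in I}\lambda_k m_k$ with finite $I$ and $\lambda_k\neq 0$. If $|I|\ge 2$, pick $u\in I$; then $p\cdot yx-c_up=\sum_{k\in I,\,k\ne u}\lambda_k(c_k-c_u)m_k$ lies in $P$, is nonzero (since the $c_k$'s are distinct), and has strictly smaller support. Induction produces some $m_k\in P$. If $k\ne 0$, repeated application of $x$ multiplies by the nonzero scalars $-q^{-2}\alpha\tfrac{1-q^{-2j}}{1-q^{-2}}$ for $1\le j\le k$, so $m_0\in P$; then applying $y$ successively $l-1$ times yields $m_1,\dots,m_{l-1}$ without ever invoking the wrap-around rule $m_{l-1}y=\mu m_0$. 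Hence $P=M_7(\mu)$, proving simplicity; the dimension $l$ is immediate from the basis.

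The only subtle point, and the one I would flag explicitly, is that this generation argument is independent of whether $\mu=0$: the descent $m_k\rightsquigarrow m_{k-1}$ via $x$ works for every $1\le k\le l-1$ precisely because $\alpha\ne 0$, and the ascent $m_0\rightsquigarrow m_{l-1}$ via $y$ uses only the ``non-wrap'' branch $m_jy=m_{j+1}$. So the proof goes through uniformly, and no case analysis on $\mu$ is needed.
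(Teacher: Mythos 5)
Your proof is correct and takes essentially the same route as the paper: separate basis vectors using the eigenstructure of $yx$ (pairwise distinct eigenvalues because $\alpha\neq 0$), then use $x$ and $y$ to generate the full module. One small remark: your eigenvalue $-q^{-2}\alpha\,\frac{1-q^{-2(k+1)}}{1-q^{-2}}$ for $m_k\cdot yx$ is the correct one; the formula $-q^{-2}\alpha\,\frac{1-q^{-2k}}{1-q^{-2}}$ stated in the paper is actually the eigenvalue of $xy$, a harmless index shift that does not affect the argument.
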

\begin{proof}
Note that the vector $m_k\in M_7({\mu})$ is an eigenvector of $yx$ associated with the eigenvalues $-q^{-2}\alpha\displaystyle\frac{1-q^{-2k}}{1-q^{-2}}$. Based on this fact, the proof is analogous to the proof of Theorem $\ref{f1}$.  
\end{proof}
\begin{theo}
    Let $\mu$ and ${\mu'}$ belong to ${\mathbb{K}}$. Then $M_7({\mu})\cong M_7({\mu}')$ if and only if $\mu={\mu'}$. 
\end{theo}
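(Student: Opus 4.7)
The plan is to exploit the spectral decomposition of the element $xy$ acting on the modules $M_7(\mu)$. The converse direction is immediate: if $\mu = \mu'$ then the two modules have identical defining actions, so the identity map is an isomorphism. So I only need to treat the forward direction.

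First I would verify directly from the defining actions that each basis vector $m_k$ of $M_7(\mu)$ is an eigenvector of the operator $xy$ (acting on the right). For $k = 0$, $m_0 x = 0$ gives $m_0(xy) = 0$, while for $1 \leq k \leq l-1$,
\[
m_k(xy) = -q^{-2}\alpha \frac{1-q^{-2k}}{1-q^{-2}}\, m_{k-1} y = -q^{-2}\alpha \frac{1-q^{-2k}}{1-q^{-2}}\, m_k,
\]
since $k-1 \leq l-2$ and no wrap-around of $y$ occurs. Because $q^2$ is a primitive $l$-th root of unity and $\alpha \neq 0$, the $l$ scalars $-q^{-2}\alpha \frac{1-q^{-2k}}{1-q^{-2}}$ for $0 \leq k \leq l-1$ are pairwise distinct, and in particular the $0$-eigenspace of $xy$ on $M_7(\mu)$ is precisely the line $\mathbb{K} m_0$; the same holds in $M_7(\mu')$ with the analogous basis $m'_0, \ldots, m'_{l-1}$.

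Next, let $\phi \colon M_7(\mu) \to M_7(\mu')$ be any $\textit{A}(\alpha,0,0)$-module isomorphism. Since $\phi$ intertwines the action of $xy$, the image $\phi(m_0)$ lies in the $0$-eigenspace of $xy$ on $M_7(\mu')$, so $\phi(m_0) = \lambda m'_0$ for some $\lambda \in \mathbb{K}$; injectivity of $\phi$ forces $\lambda \neq 0$. Finally, I would apply $\phi$ to the identity $m_0 y^l = \mu m_0$, which one reads off directly from the cyclic $y$-action in $M_7(\mu)$. Using $\phi(m_0) = \lambda m'_0$ and the analogous identity $m'_0 y^l = \mu' m'_0$ in $M_7(\mu')$, both sides become scalar multiples of $m'_0$, and cancelling $\lambda$ yields $\mu = \mu'$.

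The only step requiring any thought is choosing a canonical eigenvector that any isomorphism must preserve up to scalar; once $m_0$ is pinned down as the unique (up to scalar) null vector of $xy$, the comparison of the $y^l$-action on this line is mechanical. I do not anticipate any substantive obstacle, and the argument closely parallels the isomorphism criteria already proved for the modules $M_4, M_5, M_6$.
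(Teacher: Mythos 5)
Your proof is correct, and it follows the same method the paper uses throughout this section: pin down $\phi(m_0)$ up to a nonzero scalar by exploiting the simple spectrum of a suitable operator (here $xy$, whose eigenvalues $-q^{-2}\alpha\frac{1-q^{-2k}}{1-q^{-2}}$ on $m_k$ are pairwise distinct when $\alpha\neq 0$ and $q^2$ is a primitive $l$-th root of unity, with $0$-eigenspace exactly $\mathbb{K}m_0$), then apply $\phi$ to a defining relation (here $m_0y^l=\mu m_0$) to extract $\mu=\mu'$. This is precisely the eigenvector machinery the paper itself highlights in the proof of the simplicity of $M_7(\mu)$, so there is nothing to add.
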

\begin{rema}
    The two types of simple $\textit{A}(\alpha,0,0)$-modules above are non-isomorphic. This is because the element $x$ acts on $M_6(\underline{\mu})$ as an invertible operator, whereas the element $x$ acts on $M_7({\mu})$ as a nilpotent operator.
\end{rema}
\subsection{Classification of simple $\textit{A}(\alpha,0,0)$-modules} Suppose $N$ is a simple $\textit{A}(\alpha,0,0)$-module with nilpotent action of $z$. Then the $\mathbb{K}$-space $\ker(z)=\{v\in N: vz=0\}$ is nonzero subspace of $N$. Now, based on the defining relations of $A(\alpha,0,0)$, we can verify that $\ker(z)$ is invariant under the action of $\textit{A}(\alpha,0,0)$. Since $N$ is a simple module, we conclude that $N=\ker(z)$, i.e., $Nz=0$.
\par Denote $g:=xy-yx$ in $\textit{A}(\alpha,0,0)$. Then the following commutation relations hold 
\begin{equation}\label{crel}
    gx=q^{-2}xg\ \ \text{and}\ \ gy=q^{-2}yg.
\end{equation}  Note that the elements $x^l,y^l$ and $g$ commute with each other in $\textit{A}(\alpha,0,0)$. 
So we can choose a common eigenvector $v \in \ker(z)=N$ of these commuting operators such that
\[vx^l=\zeta_1v,\ vg=\zeta_2v, vy^l=\zeta_3v\] for some $\zeta_1,\zeta_2,\zeta_3 \in \mathbb{K}$. 
\par First suppose that $\zeta_2=0$. Then the subspace $\ker(g)=\{v\in N: vg=0\}$ is nonzero and by (\ref{crel}), the $\ker(g)$ is invariant under the action of $\textit{A}(\alpha,0,0)$. Since $N$ is simple, we conclude that $N=\ker(z)=\ker(g)$. Thus in this case $N$ becomes a simple module over the factor algebra $\textit{A}(\alpha,0,0)/\langle z,g \rangle$ which is isomorphic to the commutative algebra $\mathbb{K}[x,y]/\langle(q^2-1)yx+\alpha \rangle$. Therefore such simple modules are one-dimensional and can be easily classified.
\par Now suppose that $\zeta_2 \neq 0$. Then we consider the following two cases, depending on the scalars $\zeta_1$ and $\zeta_3$.  
\par \textbf{Case I:} First assume that $\zeta_1\neq 0$. Then the set $\{vx^k:0 \leq k \leq l-1\}$ consists of nonzero vectors of $N$. Let us choose
\[\mu_1:=\zeta_1^{\frac{1}{l}},\ \mu_2:=\zeta_2\] so that $\underline{\mu}=(\mu_1,\mu_2)\in (\mathbb{K}^*)^2$. Now define a $\mathbb{K}$-linear map
\[\Phi_6: M_6(\underline{\mu})\rightarrow N\] by specifying the image of the basis vectors of $M_6(\underline{\mu})$ only
\[\Phi_6(m_k):=\mu_1^{-k}vx^k\]One can easily verify that $\Phi_6$ is a non zero $\textit{A}(\alpha,0,0)$-module homomorphism. In this verification the following computations will be very useful:
\[\left(vx^k\right)y=\begin{cases}
\displaystyle\frac{q^{2k}\zeta_2-\alpha}{q^2-1}\left(vx^{k-1}\right),&k \neq 0 \\
\zeta_1^{-1}\displaystyle\frac{\zeta_2-\alpha}{q^2-1}(vx^{l-1}),&k= 0.
\end{cases}\]
By Schur's lemma, $\Phi_6$ is an isomorphism because $M_6(\underline{\mu})$ and $N$ are both simple $\textit{A}(\alpha,0,0)$-modules.\\
\textbf{Case II:} Assume that $\zeta_1=0$. Since $\zeta_1 =0$ there exists $0\leq p\leq l-1$ such that $vx^{p}\neq 0$ and $vx^{p+1}=0$. Define $w:=vx^{p}\neq 0$. Note that $wy^l=\zeta_3w$.
\par We assert that the vectors $wy^k,\ 0\leq k\leq l-1$ in $N$ are non-zero. If $\zeta_3 \neq 0$, we are done. If not, let us assume that $s$ is the smallest integer with $1 \leq s \leq l$ such that $wy^{s-1} \neq 0$ and $wy^s=0$. Then after simplifying the equality $(wy^s)x=0$ we have
    \[0=(wy^s)x=w\Big( q^{-2s}xy^s-q^{-2}\displaystyle\frac{1-q^{-2s}}{1-q^{-2}}\alpha y^{s-1}\Big)=-q^{-2}\displaystyle\frac{1-q^{-2s}}{1-q^{-2}}\alpha wy^{s-1}.\]
    This implies $s$ is the smallest index with $1\leq s\leq l$ such that $q^{2s}=1$. Thus we have $s=l$ and the claim follows. 
\par It follows that the vectors $wy^k$, where $0 \leq k\leq l-1$ of $N$ are non-zero. Take ${\mu}=\zeta_3\in \mathbb{K}$. Now define a $\mathbb{K}$-linear map \[\Phi_7:M_7({\mu})\rightarrow N\] by specifying the image of the basis vectors of $M_7({\mu})$ only
\[\Phi_7(m_k):=wy^k\] One can easily verify that $\Phi_7$ is a non zero $\textit{A}(\alpha,0,0)$-module homomorphism. In this verification the following computations will be very useful:
\[\left(wy^k\right)x=\begin{cases}
-q^{-2}\displaystyle\frac{1-q^{-2k}}{1-q^{-2}}\alpha\left(wy^{k-1}\right),&k \neq 0 \\
0,&k= 0.
\end{cases}\]
By Schur's lemma, $\Phi_7$ is an isomorphism because $M_7({\mu})$ and $N$ are both simple $\textit{A}(\alpha,0,0)$-modules.
\par Finally the above discussions lead us to the final result of this section which provides an opportunity for the classification of simple $\textit{A}(\alpha,\beta,\gamma)$-modules in terms of scalar parameters.
\begin{theo}\label{main4}
Let $q^2$ be a primitive $l$-th root of unity. Then each simple $\textit{A}(\alpha,0,0)$-module with nilpotent action of $z$ and invertible action of $g$ is isomorphic to one of the simple $\textit{A}(\alpha,0,0)$-modules
\begin{itemize}
    \item $M_6(\underline{\mu})$ for some $\underline{\mu}:=(\mu_1,\mu_2)\in  {(\mathbb{K}^*)}^2$.
    \item $M_7({\mu})$ for some $\mu\in \mathbb{K}$.
\end{itemize}

\end{theo}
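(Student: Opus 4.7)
The plan is to exhibit any such simple $\textit{A}(\alpha,0,0)$-module $N$ as $M_6(\underline{\mu})$ or $M_7(\mu)$ by producing an explicit basis from a common eigenvector of the commuting operators $x^l$, $y^l$, and $g := xy-yx$. First I would reduce to $Nz = 0$: the subspace $\ker(z)$ is nonzero because $z$ acts nilpotently, and since $\beta = \gamma = 0$ the relations $xz = q^{-2}zx$ and $yz = q^2 zy$ imply $\ker(z)$ is an $\textit{A}(\alpha,0,0)$-submodule, so simplicity forces $N = \ker(z)$. A direct computation from the defining relations yields $g = (q^2-1)yx + \alpha$ together with $gx = q^{-2}xg$ and $gy = q^{-2}yg$, and since $x^l, y^l$ are central, the three operators $x^l$, $y^l$, $g$ commute pairwise on $N$. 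By Schur's lemma they share a common eigenvector $v \in N$ with eigenvalues $\zeta_1, \zeta_3, \zeta_2$ respectively, where $\zeta_2 \neq 0$ by the invertibility hypothesis on $g$.

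Next I would split into two cases according to $\zeta_1$. If $\zeta_1 \neq 0$, then $x$ acts invertibly on $N$, and using $gx^k = q^{-2k}x^k g$ each $vx^k$ is a $g$-eigenvector with eigenvalue $q^{-2k}\zeta_2$; these $l$ scalars are distinct because $q^2$ has order exactly $l$, so $\{vx^k : 0 \leq k \leq l-1\}$ is linearly independent. Setting $\mu_1 = \zeta_1^{1/l}$ and $\mu_2 = \zeta_2$, I would verify that $\Phi_6 : M_6(\underline{\mu}) \to N$ defined by $\Phi_6(m_k) = \mu_1^{-k} vx^k$ intertwines the actions of $x, y, z$; the key step uses the identity $x^k y = q^{2k} y x^k + \frac{1-q^{2k}}{1-q^2}\alpha x^{k-1}$ from Section 2 together with the scalar identity $v(yx) = \frac{\zeta_2 - \alpha}{q^2-1} v$ to obtain $(vx^k)y = \frac{q^{2k}\zeta_2 - \alpha}{q^2-1}\, vx^{k-1}$ for $k \geq 1$. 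Schur's lemma then promotes this nonzero map to an isomorphism.

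If instead $\zeta_1 = 0$, then $x$ acts nilpotently on $N$, so I would pick the largest $p$ with $w := vx^p \neq 0$; then $wx = 0$ and $wy^l = \zeta_3 w$. To see that $\{wy^k : 0 \leq k \leq l-1\}$ consists of nonzero vectors I would repeat the standard argument used earlier in the paper: if $wy^s = 0$ with $s$ minimal, then expanding $(wy^s)x = 0$ via $y^s x = q^{-2s}xy^s - q^{-2}\frac{1-q^{-2s}}{1-q^{-2}}\alpha y^{s-1}$ and using $wx = 0$ forces $q^{2s} = 1$, so $s = l$. Setting $\mu = \zeta_3$, the $\mathbb{K}$-linear map $\Phi_7 : M_7(\mu) \to N$ with $\Phi_7(m_k) = wy^k$ is then checked to be a nonzero $\textit{A}(\alpha,0,0)$-module homomorphism, hence an isomorphism by Schur's lemma.

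The main technical obstacle will be the boundary wrap-around verification for $\Phi_6$: checking $\Phi_6(m_0 y) = \Phi_6(m_0) y$ requires expressing $vy$ as an explicit scalar multiple of $vx^{l-1}$, which is where the invertibility of $x$ (via $\zeta_1 \neq 0$ and $vx^l = \zeta_1 v$) is essential for cancelling an $x$ from the identity $(vy)x = \frac{\zeta_2 - \alpha}{q^2-1}\, v$. By contrast, the corresponding boundary checks for $\Phi_7$ reduce to $wx = 0$ (immediate from the choice of $p$) and $wy^l = \zeta_3 w$ (immediate from the eigenvalue), so no further subtlety arises there.
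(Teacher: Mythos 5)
Your proposal follows the same route as the paper: reduce to $Nz=0$, take a common eigenvector $v$ of $x^l$, $y^l$, $g$, and split on $\zeta_1\neq 0$ versus $\zeta_1=0$ to build $\Phi_6$ or $\Phi_7$ exactly as in the text. One small slip: for a right module, $gx=q^{-2}xg$ gives $x^kg=q^{2k}gx^k$, so $(vx^k)g=q^{2k}\zeta_2\,vx^k$ (eigenvalue $q^{2k}\zeta_2$, not $q^{-2k}\zeta_2$), though this does not affect the distinctness argument or the rest of the proof.
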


\end{document}